\documentclass[12pt]{article}
\usepackage[T1]{fontenc}
\usepackage{amssymb} 
\usepackage{amssymb}
\usepackage{amsmath, amscd}
\usepackage{color}
\usepackage{latexsym}
\usepackage{hyperref}
\usepackage{graphicx}
\usepackage{mathrsfs}
\usepackage{changes}
\newtheorem{theorem}{Theorem}[section]
\newtheorem{lemma}[theorem]{Lemma}
\newtheorem{corollary}[theorem]{Corollary}

\newtheorem{proposition}[theorem]{Proposition}

\newcommand{\qed}{\hfill $\Box$ }

\newcommand{\proof}{\noindent{\bf Proof.}\ \ }
\usepackage[algoruled,linesnumbered]{algorithm2e}
\usepackage{algorithmic}

\begin{document}

\title{\Large {\bf  {Isomorphic daisy cubes based on their $\tau$-graphs}}}
\maketitle

\begin{center}
{\large \bf  
Zhongyuan Che$^{a}$,
Niko Tratnik$^{b,c}$,\\ Petra \v Zigert Pleter\v sek$^{d,b}$
}
\end{center}
\bigskip\bigskip

\baselineskip=0.20in

\smallskip

$^a$ {\it Department of Mathematics, Penn State University, Beaver Campus, Monaca, USA} \\

$^b$ {\it University of Maribor, Faculty of Natural Sciences and Mathematics, Slovenia} \\

$^c$ {\it Institute of Mathematics, Physics and Mechanics, Ljubljana, Slovenia} \\

$^d$ {\it University of Maribor, Faculty of Chemistry and Chemical Engineering, Slovenia}\\

\begin{center}
{\tt  zxc10@psu.edu, niko.tratnik@um.si, petra.zigert@um.si}
\end{center}

\begin{abstract}
We prove that if $A$ and $B$ are daisy cubes whose $\tau$-graphs   are forests,
then $A$ and $B$ are isomorphic if and only if their $\tau$-graphs are isomorphic.
The result is applied to show that a daisy cube with at least one edge  is the resonance graph of a plane  bipartite graph $G$
if and only if  its $\tau$-graph is a  forest which is isomorphic to the inner dual of the subgraph of $G$ 
obtained by removing all forbidden edges. As a consequence, some well known properties of Fibonacci cubes and Lucas cubes
are provided as examples with different proofs.
 \vskip 0.1in
{\noindent {\emph{Keywords}}:  daisy cube, $\tau$-graph,  peripherally 2-colorable graph,  resonance graph}
\end{abstract}

\section{Introduction}
 
Daisy cubes and median graphs are two important subfamilies of partial cubes, 
both of them are closely related to the resonance graphs in chemical graph theory.
It is known \cite{ZLS08} that the resonance graph of a plane weakly elementary bipartite graph is a median graph. 
A characterization of plane bipartite graphs whose resonance graphs are daisy cubes was given recently
in \cite{BCTZ25}. More {structural} properties of resonance graphs that are daisy cubes {were provided} in \cite{BCTZ26} 
and \cite{CC25}.

The $\tau$-graphs of median graphs were introduced by Vesel \cite{V05} to characterize resonance graphs
of catacondensed hexagonal graphs: A graph $G$ is the
resonance graph of a catacondensed hexagonal graph if and only if $G$ is a median graph whose $\tau$-graph is
a tree $T$ with vertex degree at most $3$, and the degree-3 vertices of $T$ correspond to the
peripheral $\Theta$-classes of $G$.
Klav\v zar and Kov\v se  \cite{KK07} extended the concept  of the $\tau$-graphs to partial cubes,
and showed that every graph is a $\tau$-graph of some median graph,
the $\tau$-graph of a median graph is $K_n$-free if and only if it does not contain
any convex $K_{1,n}$, and the $\tau$-graph of a graph $G$  is connected if and only if $G$ is a prime graph with respect to
the Cartesian product.

We use $P_n$ and $C_n$ to represent a path and a cycle on $n$ vertices, respectively.
It is known \cite{KK07} that non-isomorphic partial cubes can have the same $\tau$-graph: 
the partial cube $C_6$ (neither a median graph nor a daisy cube), the median graph $K_{1,3}$ (also the Lucas cube $\Lambda_3$),  
and the daisy cube $Q^{-}_3$ (not a median graph) all have the same $\tau$-graph $K_3$,
see Figure \ref{isomorphic_tau}.

\begin{figure}[h!] 
\begin{center}
\includegraphics[scale=.8, trim= 0cm 0.5cm 0cm 0cm]{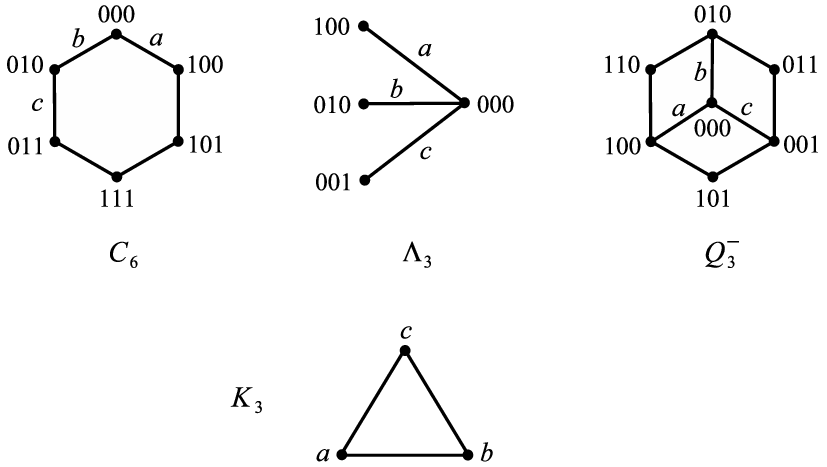}
\end{center}
\caption{\label{isomorphic_tau} Partial cubes $C_6$, $\Lambda_3$, and $Q_3^{-}$ 
with $\Theta$-classes $a,b,c$ have the same $\tau$-graph $K_3$.}
\end{figure}

Besides the example presented in Figure \ref{isomorphic_tau},
we observe that for $n \ge 3$,  a path $P_{n+1}$ (not a daisy cube) 
and a Fibonacci cube $\Gamma_n$ (a daisy cube) are non-isomorphic 
partial cubes with the same $\tau$-graph $P_n$.
We also observe that non-isomorphic daisy cubes can have the same $\tau$-graph, see Figure \ref{isomorphic_tau1}. 

\begin{figure}[h!] 
\begin{center}
\includegraphics[scale=.8, trim= 0cm 0.5cm 0cm 0cm]{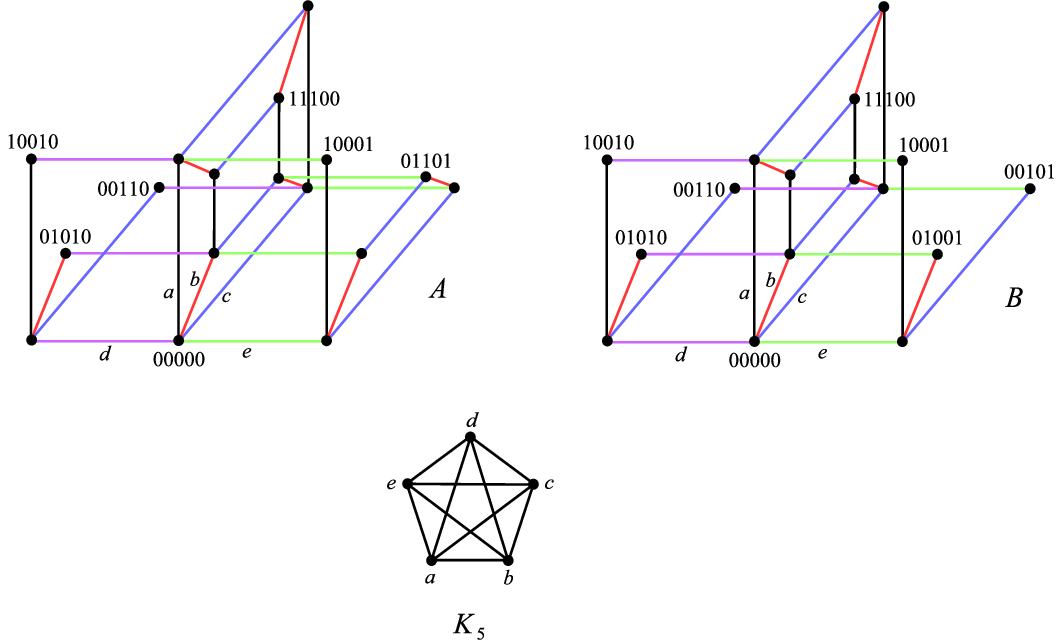}
\end{center}
\caption{\label{isomorphic_tau1} Two non-isomorphic daisy cubes with the same $\tau$-graph $K_5$.}
\end{figure}

In this paper, we focus on studying $\tau$-graphs of daisy cubes. For two daisy cubes $A$ and $B$ whose $\tau$-graphs are forests, 
we prove that $A$ and $B$ are isomorphic if and only if their $\tau$-graphs $A^{\tau}$ and $B^{\tau}$ are isomorphic.
We then apply the result to characterize that a daisy cube {with at least one edge} is the resonance graph of a plane bipartite graph $G$
if and only if the $\tau$-graph of the daisy cube is a forest which is isomorphic to the inner dual of the subgraph of $G$ 
obtained by removing all forbidden edges.
In particular,  a daisy cube is the resonance graph of a plane elementary bipartite graph $G$ {with more than two vertices}
if and only if the $\tau$-graph of the daisy cube is a tree which is isomorphic to the inner dual of $G$.
Related well known properties of $\tau$-graphs of Fibonacci cubes and Lucas cubes are considered as examples with different proofs.

 \section{Preliminaries}\label{S:2}
 
Let $G$ be a simple graph. We use $V(G)$ and $E(G)$ to denote the vertex set and the edge set of $G$, respectively.
A vertex of $G$ is called \textit{non-isolated} if it is adjacent to at least one other vertex of $G$.
A \textit{component} of $G$ is a maximal connected induced subgraph of $G$.
The \textit{complement} of $G$, written as $G^C$, is a graph with the same vertex set as $G$
such that two vertices are adjacent in $G^C$ if and only if they are not adjacent in $G$. 
A subgraph of $G$ induced by a vertex subset $X \subseteq V(G)$ is denoted as $\langle X \rangle$. 
A \textit{clique} $K$ of $G$ is a subgraph such that any two vertices of $K$ are adjacent in $G$.
The \textit{simplex graph}  of $G$, denoted by $\mathcal{K}(G)$, is the graph whose vertices are the cliques 
of $G$ (including the clique on empty set), and two cliques are adjacent if they differ in exactly one vertex.

 An \textit{isomorphism} between two graphs $G_1$ and $G_2$,
 denoted by $f: G_1 \rightarrow G_2$,  is a bijection $f$ between  $V(G_1)$
and $V(G_2)$ such that $u_1v_1 \in E(G_1)$ if and only if $f(u_1)f(v_1) \in E(G_2)$. 
We call that $G_1$ and $G_2$ are \textit{isomorphic} if there exists an isomorphism between them,
and \textit{non-isomorphic} otherwise.

 A \textit{Cartesian product} $\Box_{i=1}^{n} G_i$ of $n$ graphs $G_1, G_2, \ldots G_n$, where $n \ge 2$,
is a graph whose vertex set is $V(G_1) \times V(G_2) \times \cdots \times V(G_n)$ 
and two vertices $(x_1,\ldots, x_n)$ and $(y_1,\ldots, y_n)$ 
are adjacent in  $\Box_{i=1}^{n} G_i$ if there exists an $i \in \{ 1, \ldots, n \}$ such that   $x_iy_i \in E(G_i)$, 
and $x_j=y_j$ for any $j \in \{ 1, \ldots, n \} \setminus \{ i \}$.  
An \textit{$n$-dimensional hypercube} $Q_n$  is a Cartesian product $\Box_{i=1}^{n} K_2$ for $n \ge 2$, 
while $Q_0$ is a one-vertex graph, and $Q_1$ is the one-edge graph.
It is easily seen that $Q_n=\mathcal{K}({K_n})$, where $K_n$ is the complete graph on $n$ vertices.

 Let $G$ be a connected graph. The \textit{distance} $d_G(x,y)$ 
is the length of a shortest path between two vertices $x$ and $y$ of $G$, and
the \textit{interval} $I_G(x,y)$ is the set of all vertices on
shortest paths  between two vertices $x$ and $y$ in $G$. 
A subgraph $H$ of $G$ is an \textit{isometric subgraph} if $d_H(u,v)=d_G(u,v)$ for every two vertices $u$ and $v$ of $H$.
\textit{Partial cubes} are isometric subgraphs of hypercubes. 
A graph $G$ is a \textit{median graph} if $G$
 is a connected graph such that for every triple of vertices $u,v,w$ in $G$, 
 $|I_G(u,v) \cap I_G(u,w) \cap I_G(v,w) | =1$. It is well known  \cite{M80, HIK11} that median graphs are partial cubes.
 
 Let $n$ be a positive integer. Let $(\mathcal{B}^n, \le)$ be a poset on the set of all binary strings of length $n$
with the partial order $u_1u_2 \ldots u_n \leq v_1v_2 \ldots v_n$ if $u_i \leq v_i$ for all $1 \le i \le n$.
A \textit{daisy cube generated by $X  \subseteq \mathcal{B}^n$} is an induced  subgraph of $Q_n$, 
and defined  as $Q_n(X)=\langle \{ u \in \mathcal{B}^{n} \ | \ u \leq x \textrm{ for some } x \in X \} \rangle$ \cite{KM19}. 
The vertex  $0^n$ is the \textit{minimum vertex} of $Q_n(X)$.

\textit{Djokovi\' c-Winkler relation} 
(briefly, \textit{relation $\Theta$}) is a relation on the set of edges of a connected graph $G$ such that
two edges $x_1x_2$ and $y_1y_2$ of  $G$ are  
in \textit{relation $\Theta$} if $d_G(x_1, y_1) + d_G(x_2, y_2) \neq  d_G(x_1, y_2) + d_G(x_2, y_1)$.
The relation $\Theta$ is an equivalence relation on the edge set of any partial cube, see \cite{D73}.
   
 Let  $ab$ be an edge of a connected graph $G$. Then four subsets of the vertex set $V(G)$ can be defined:
\begin{eqnarray*}
W_{ab} &=& \{w \mid w \in V(G), d_G(w,a) < d_G(w,b)\},\\
W_{ba} &=& \{w \mid w \in V(G), d_G(w,b) < d_G(w,a)\},\\
U_{ab} &=& \{u \in W_{ab} \mid u \mbox{ is adjacent to a vertex in } W_{ba}\},\\
U_{ba} &=& \{v \in W_{ba} \mid v \mbox{ is adjacent to a vertex in } W_{ab}\}.
\end{eqnarray*}
Any $\Theta$-class of a partial cube $G$ is a set of edges represented by 
$F_{ab}=\{ e \in E(G) \mid  e \Theta ab\}$ for some edge $ab \in G$ and 
the spanning subgraph $G-F_{ab}$ has exactly two components: $\langle W_{ab} \rangle$ and $\langle W_{ba} \rangle$ \cite{HIK11}. 
Moreover, a $\Theta$-class of a partial cube is called a \textit{peripheral $\Theta$-class} if either $W_{ab}=U_{ab}$ or $W_{ba}=U_{ba}$.

Suppose that $H$ is a graph with $V(H)=V_1 \cup V_2$, where $V_1 \cap V_2 \neq \emptyset$,
$\langle V_1 \rangle$ and $\langle V_2 \rangle $ are isometric subgraphs of $H$,
and there is no edge of $H$ between $ V_1 \setminus V_2$ and $V_2 \setminus V_1$.
The \textit{expansion} of $H$ with respect to $\langle V_1 \cap V_2 \rangle$ is the graph $G$ obtained from $H$ by the following steps:
(i) Replace each $v \in V_1 \cap V_2$ by an edge $v_1v_2$.
(ii) Insert edges between $v_1$ and all neighbors of $v$ in $V_1 \setminus V_2$,
and insert  edges between $v_2$ and all neighbors of $v$ in $V_2 \setminus V_1$.
(iii) Insert the edges $u_1v_1$ and $u_2v_2$ if $u,v \in V_1 \cap V_2$ are adjacent in $H$. See  \cite{HIK11}.
If $H=\langle V_1 \rangle$, then the expansion is called a \textit{peripheral expansion} of $H$ with respect to $H_0=\langle V_2 \rangle$, 
and denoted by $\mathrm{pe}(H,H_0)$. By definition, we can see that if $G=\mathrm{pe}(H,H_0)$, then $H_0$ is an isometric subgraph of  $H$,
and $H$ is an isometric subgraph of $G$ such that
$H_0$ is isomorphic to $G \setminus V(H)$, which is the induced subgraph of $G$ obtained by removing all vertices from $H$.
Moreover, if $H$ is a partial cube, then the edges between $H_0 \subseteq H$ and $G \setminus V(H)$  form the 
$\Theta$-class $\{ e \in E(G) \mid  e \Theta ab\}$ for some edge $ab \in E(G)$
such that $H=\langle W_{ab} \rangle$ and $H_0= \langle U_{ab} \rangle$, 
which is isomorphic to $\langle W_{ba} \rangle= \langle U_{ba} \rangle=G \setminus V(H)$.

If $\langle V_1 \cap V_2 \rangle$ is an isometric subgraph (respectively, a convex subgraph)  of $H$, 
then the expansion is call an isometric expansion
(respectively, a convex expansion).
A graph $G$ is a partial cube if and only if $G$ can be obtained from the one-vertex
graph by a sequence of isometric expansions \cite{C88}. 
A graph $G$ is a median graph if and only if $G$ can be obtained from the one-vertex
graph by a sequence of convex expansions \cite{M78}.

The inverse operation of an expansion in partial cubes $G$ is called a \textit{contraction},
that is, a contraction of a partial cube $G$
is obtained by contracting the edges of a given $\Theta$-class $\mathcal{E}$, {which is denoted by $G/\mathcal{E}$.}
It is known  \cite{T20} that  every $\Theta$-class of a daisy cube is peripheral. 
Therefore, if $G$ is a daisy cube, then $G/\mathcal{E}$ is an induced subgraph of $G$ for any $\Theta$-class $\mathcal{E}$ of $G$.

Let $G$ be a partial cube whose vertex set is a poset $(V(G), \le)$ contained in $(\mathcal{B}^n, \le)$.
Let  $\mathcal{P}(V(G))$ be the power set of $V(G)$.
Then an operator $o: \mathcal{P} (V (G)) \rightarrow \mathcal{P} (V (G))$ can be defined
with $o(X) = \{u \in V(G) \mid  u  \le v \text{ for some } v \in X\}$ for any $X \subseteq V(G)$. 
An induced subgraph $H$ of $G$ is called a \textit{$\le$-subgraph of $G$} if  $V (H) = o(V (H))$. 
Assume that $G$ is a daisy cube and $H$ is a $\le$-subgraph of $G$.
Then the peripheral expansion of $G$ with respect to $H$ is called the \textit{$\leq$-expansion} of $G$ with respect to $H$.
A connected graph is a daisy cube if and only if it can be obtained from the one-vertex graph by a sequence of $\leq$-expansions  \cite{T20}.
It is well known that a contraction of a daisy cube is also a daisy cube \cite{KM19}.

Fibonacci cubes and Lucas cubes are special types of daisy cubes. For each $n \ge 1$,
a {\em Fibonacci cube} $\Gamma_n$  is a graph whose vertex set is 
the set of all binary strings of length $n$ without consecutive 1's, and two vertices 
are adjacent in $\Gamma_n$ if they differ in exactly one position. 
A {\em Lucas cube} $\Lambda_n$  is the induced subgraph of the Fibonacci cube $\Gamma_n$ 
such that the vertex set of $\Lambda_n$ consists of all binary strings 
of length $n$ without consecutive 1's and also without 1 in the first and the last positions. 

The \textit{$\tau$-graph of a partial cube $H$, denoted by $H^{\tau}$},  is a graph
whose vertex set is the set of $\Theta$-classes of $H$, and two distinct  $\Theta$-classes $\mathcal{E}$ and $\mathcal{F}$ are adjacent
in $H^{\tau}$ if $H$ has two edges $uv \in \mathcal{E}$ and $vw \in \mathcal{F}$ such that $uvw$ 
is a convex path on three vertices, that is,  
vertex $v$ is the only common neighbor of two nonadjacent vertices $u$ and $w$ \cite{KK07}. 
Note that the \textit{$\tau$-graph ${R(G)}^{\tau}$} of the resonance graph $R(G)$ of a plane bipartite graph $G$
is equivalent to  the \textit{induced graph $\Theta(R(G))$} defined in 
\cite{C19} as a graph
whose vertex set is the set of $\Theta$-classes of $R(G)$, and two vertices $\mathcal{E}$ 
and $\mathcal{F}$ of $\Theta(R(G))$ are adjacent if  
$R(G)$ has two incident edges $uv \in \mathcal{E}$ and $vw \in \mathcal{F}$ 
such that $uv$ and $vw$  are not contained in a common 4-cycle of $R(G)$.
For clarity and consistence, we will use the notation  \textit{$\tau$-graph ${R(G)}^{\tau}$} introduced in \cite{KK07}.

\section{Isomorphic daisy cubes based on their $\tau$-graphs}

In this section, we prove the main result of the paper. 
Firstly, we show that a daisy cube is a hypercube $Q_n$ if and only if its $\tau$-graph {is an empty graph on $n$ vertices.}

\begin{lemma} \label{lema_cube}
Let $H$ be a daisy cube with $n$ $\Theta$-classes for some positive integer $n$. 
Then $H=Q_n$ if and only if $H^{\tau}=K^C_n$.
\end{lemma}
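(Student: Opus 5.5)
We prove the two directions separately, using only the definition of the $\tau$-graph and the description of a daisy cube $H=Q_n(X)$ as a down-closed induced subgraph of a hypercube. Throughout we normalize so that $H$ has exactly $n$ $\Theta$-classes. Then no coordinate is constant on $V(H)$, and the $\Theta$-class of an edge is the coordinate in which its endpoints differ. Every vertex $u$ satisfies $0^n\le u$, so each $\Theta$-class $i$ contains the edge $0^n e_i$, where $e_i$ denotes the unit string with a $1$ in position $i$. Consequently every $e_i\in V(H)$.

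\emph{($\Rightarrow$)} Suppose $H=Q_n$. Take any path $uvw$ with $uv$ in class $i$ and $vw$ in class $j\ne i$. The vertex $v'$ obtained from $v$ by flipping both coordinates $i$ and $j$ lies in $Q_n$. It is a second common neighbour of $u$ and $w$, so $uvw$ is not convex. Hence $Q_n^{\tau}$ has no edges, that is, $Q_n^{\tau}=K_n^C$.

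\emph{($\Leftarrow$)} Suppose $H^{\tau}=K_n^C$, and argue by contradiction that $H$ is not all of $Q_n$. Since $V(H)$ is down-closed and $H\ne Q_n$, we choose a string $w\notin V(H)$ of minimal weight $k$. All proper lower strings of $w$ then lie in $H$. Because every $e_i\in V(H)$, we have $k\ge 2$.

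For $k=2$, write $w=e_i+e_j$. The path $e_i\,0^n\,e_j$ has $0^n$ as the only common neighbour of its ends: the only other candidate is $w$, which is absent, and $H$ is induced in $Q_n$. So the path is convex and classes $i$ and $j$ are adjacent in $H^{\tau}$, a contradiction.

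For $k\ge 3$ we instead use a strictly minimal non-element, and this is the step requiring care. We claim that if some string is missing from $V(H)$, then some string of weight $2$ is missing. Take $w$ as above, of weight $k\ge3$. Pick distinct $i,j\in\mathrm{supp}(w)$, set $v=w-e_i-e_j$, and consider the path $(v+e_i)\,v\,(v+e_j)$. All three vertices lie in $H$ by minimality of $w$. The common neighbours of $v+e_i$ and $v+e_j$ in $Q_n$ are exactly $v$ and $w$, and $w\notin V(H)$. So this path is convex in $H$, and classes $i,j$ are adjacent in $H^{\tau}$, again a contradiction.

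This argument works uniformly for every $k\ge 2$ (for $k=2$ it gives $v=0^n$), so no separate reduction is needed. We conclude that $V(H)=\mathcal{B}^n$, and hence $H=Q_n$.

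The only delicate points are the normalization ensuring each class corresponds to a coordinate and that $0^n\in V(H)$, and the observation that two vertices at distance $2$ in $Q_n$ have exactly two common neighbours. Both are routine.
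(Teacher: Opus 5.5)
Your argument is correct, but it takes a different route from the paper.

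\textbf{The paper's route.} The paper proves sufficiency by induction on $n$, using Taranenko's description of daisy cubes as iterated $\le$-expansions. It writes $H$ as a peripheral expansion of a smaller daisy cube $H'$ with respect to a $\le$-subgraph $H''$. It then argues that an edgeless $H^{\tau}$ forces $H''=H'$, since otherwise some $2$-path using an edge of the new class $\mathcal{E}_n$ would be convex. Hence $H=H'\Box K_2$, and the induction hypothesis is applied to $H'$.

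\textbf{Your route.} You avoid the expansion machinery and the induction altogether. A missing string $w$ of minimal weight, together with any two coordinates $i,j$ in its support, yields the path $(w-e_j)\,(w-e_i-e_j)\,(w-e_i)$. This path is convex exactly because its only other candidate common neighbour, $w$, is absent.

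\textbf{What each buys.} Your version is more elementary and self-contained. The paper states the step $H''=H'$, and the fact that $H'^{\tau}$ is again edgeless, only briefly. Your common-neighbour count also extends easily: applied to the other two shapes of a $2$-path, it shows that the convex $2$-paths of a daisy cube are precisely the paths $(v+e_i)\,v\,(v+e_j)$ with $v+e_i+e_j\notin V(H)$. The paper's version, in turn, is phrased in the expansion/contraction language that also drives the induction in Theorem~\ref{T:tau-graph-forest-isomorphism}.

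Two small repairs to the write-up:
\begin{itemize}
\item The sentence ``if some string is missing from $V(H)$, then some string of weight $2$ is missing'' is false as a standalone claim: $Q_3^{-}$ misses only $111$. It is also not what you prove. Delete it together with the separate $k=2$ case, since your computation covers every $k\ge 2$ uniformly.
\item Justify the normalization in one line. Any $u,v\in V(H)$ are joined by a geodesic through the coordinatewise minimum $u\wedge v$ that stays inside the down-set, so $H$ is isometric in the hypercube. In an isometric subgraph of a hypercube, two edges are in relation $\Theta$ if and only if they flip the same coordinate. Moreover, $e_i\in V(H)$ holds because some vertex has a $1$ in position $i$ and $V(H)$ is down-closed, not merely because $0^n\le u$ for all $u$.
\end{itemize}
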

 \proof
The necessity part is obvious.
Let $H^{\tau}=K^C_n$, which is an empty graph on $n$ vertices. 
Prove the sufficiency by induction on $n$. 
If $n=1$, then any daisy cube with exactly one $\Theta$-class is the one-edge graph which is $Q_1$,
and so $H=Q_1$.
If $n=2$, then any daisy cube with exactly two $\Theta$-classes is either $P_3$ or $Q_2$,
where $P_3^{\tau}=K_2$ and $Q_2^{\tau}=K^C_2$. It follows that $H=Q_2$. 
Suppose that the {sufficiency} is true for any daisy cube with less than $n$ $\Theta$-classes where $n \ge 3$.
Let $H$ be a daisy cube with $n$ $\Theta$-classes such that $H^{\tau}=K^C_n$.
Recall \cite{T20} that a connected graph is a daisy cube if and
only if it can be obtained from the one-vertex graph by a sequence of $\le$-expansions,
and any $\Theta$-class of a daisy cube is peripheral.
Then $H$ can be obtained from a {daisy cube} $H'$ 
{by} a peripheral $\le$-expansion with respect to a {$\le$-subgraph} $H''$ of $H'$,
and a new $\Theta$-class $\mathcal{E}_n$ is generated during the expansion.
It is well known \cite{KM19} that every $\Theta$-class of a daisy cube $H$ has exactly one edge incident to $0^n$.
Then the new $\Theta$-class $\mathcal{E}_n$ has one edge incident to at least one edge from each of other $\Theta$-class of $H$.
Since $H^{\tau}=K^C_n$, it follows that $H''=H'$ and $H=H' \Box K_2$
to avoid any convex path of length 2 with one edge from $\mathcal{E}_n$.
By induction hypothesis, $H'=Q_{n-1}$.
Hence, $H=Q_n$. \qed \\

 \begin{theorem}\label{T:tau-graph-forest-isomorphism}
Let $A$ and $B$ be daisy cubes whose $\tau$-graphs $A^{\tau}$ and $B^{\tau}$ are forests. 
Let $\{\mathcal{E}_1,  \ldots, \mathcal{E}_n\}$ and $\{\mathcal{F}_1,  \ldots, \mathcal{F}_n\}$
be the sets of $\Theta$-classes of $A$ and $B$, respectively. If
$\Upsilon: A^{\tau} \rightarrow B^{\tau}$ is an isomorphism 
such that $\Upsilon(\mathcal{E}_i)=\mathcal{F}_i$
for $i \in \{1, \ldots, n\}$, then there exists an isomorphism $\lambda: A \rightarrow B$ such that 
 $uv \in \mathcal{E}_i$ if and only if $\lambda(u)\lambda(v) \in \mathcal{F}_i$ for $i \in \{1, \ldots, n\}$.
Moreover, the restriction of $\lambda$ on $A/\mathcal{E}_j$ is an isomorphism 
between $A/\mathcal{E}_j$ and $B/\mathcal{F}_j$ for any $j \in \{1, \ldots, n\}$.
\end{theorem}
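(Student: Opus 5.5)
Our plan is to argue by induction on the number $n$ of $\Theta$-classes, peeling off a leaf (or isolated vertex) of the forest $A^{\tau}$. The main device is the daisy-cube coordinatization. Every $\Theta$-class of a daisy cube has exactly one edge incident to the minimum vertex $0^n$ \cite{KM19}. We may therefore index coordinates so that $\mathcal{E}_i$ (resp.\ $\mathcal{F}_i$) is the set of edges of $A$ (resp.\ $B$) whose endpoints differ in coordinate $i$. A vertex of a daisy cube is then a subset $S\subseteq\{1,\dots,n\}$, and the vertex set is a down-closed family containing all singletons.

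The first step is to read the vertex set off the $\tau$-graph. We would show that, for a daisy cube $A$ whose $\tau$-graph is a forest, a set $S$ is a vertex of $A$ if and only if $S$ is an independent set of $A^{\tau}$. The choice of independent sets is forced by the examples $P_3=\Gamma_2$ and $\Gamma_n$, whose $\tau$-graph is a path $P_n$ and whose vertices are exactly the independent sets of $P_n$. If this holds, $A$ and $B$ are the graphs on independent sets of $A^{\tau}\cong B^{\tau}$, with adjacency given by symmetric difference of size one. The map $\lambda$ defined by $\lambda(S)=\{i : i\in S\}$, i.e.\ applying $\Upsilon$ to indices, is then an isomorphism sending $\mathcal{E}_i$ to $\mathcal{F}_i$.

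The restriction claim also follows. Contracting the peripheral class $\mathcal{E}_j$ gives the induced subgraph on vertices with $j\notin S$, and $\lambda$ maps this set onto the vertices of $B$ with $j\notin S$, i.e.\ onto $B/\mathcal{F}_j$.

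To prove the characterization, we first establish the local fact for two classes $i\ne k$. The classes $\mathcal{E}_i$ and $\mathcal{E}_k$ are adjacent in $A^{\tau}$ if and only if $\{i,k\}\notin V(A)$. If $\{i,k\}\notin V(A)$, the path $\{i\},\emptyset,\{k\}$ is convex, so the classes are adjacent. Conversely, suppose $\{i,k\}\in V(A)$ and the two classes are adjacent via a convex path $S\cup\{i\},S,S\cup\{k\}$ (up to orientation of the edges). Then $S\cup\{i,k\}\notin V(A)$, which yields a square-free configuration. In the other orientations, down-closure forces the missing corner $S\setminus\{i,k\}$ to be a vertex, so the path is not convex.

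We then argue by induction on $|S|$. Independent sets that are vertices are fine by down-closure. Every vertex is an independent set by the local fact applied to $2$-subsets. The real content is the converse: every independent set $S$ of $A^{\tau}$ lies in $V(A)$.

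This converse is the main obstacle, and it is exactly where the forest hypothesis enters. (The $K_3$ examples, for instance $Q_3^-$ versus $\Lambda_3$, show it fails for cycles.) Suppose $S$ is a minimal independent set that is not a vertex, so all proper subsets of $S$ are vertices. Pick $i,k\in S$ and $R=S\setminus\{i,k\}$. The path $R\cup\{i\},R,R\cup\{k\}$ is then convex, so $\mathcal{E}_i$ and $\mathcal{E}_k$ are adjacent in $A^{\tau}$, contradicting independence.

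I expect the genuine difficulty to be the step in the local fact that excludes adjacency arising deeper in the cube when $\{i,k\}\in V(A)$. The worry is a convex path $S\cup\{i\},S,S\cup\{k\}$ with $S\ne\emptyset$ and $S\cup\{i,k\}\notin V$, which would create a $\tau$-edge between two classes whose pair is nonetheless a vertex. Here I would use the forest hypothesis as follows. Such a configuration, together with a minimal missing set, should force a cycle in $A^{\tau}$ among the indices of $S\cup\{i,k\}$. To exhibit it, I would take a minimal non-vertex $T\subseteq S\cup\{i,k\}$ containing $i$ and $k$, and check that all $2$-subsets of $T$ give convex paths at the level just below $T$. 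This would make $T$ a clique in $A^{\tau}$, so $|T|=2$ and $T=\{i,k\}$, a contradiction. The same minimality argument is what I would write carefully; it is essentially equivalent to the converse argument above, so both rest on the minimal-missing-set trick. Once it is secured, the induction on $n$ is unnecessary and the isomorphism $\lambda$ is explicit.
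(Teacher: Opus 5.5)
Your proof is correct, and it takes a genuinely different route from the paper's.

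\textbf{The paper's route.} It inducts on $n$ by deleting a leaf $\mathcal{E}_n$ of the forest $A^{\tau}$. It first shows $(A/\mathcal{E}_n)^{\tau}\cong A^{\tau}\setminus\{\mathcal{E}_n\}$, a step that uses only the absence of triangles. It then proves that the expansion subgraph $\bar{\mathbb{A}}$ is a daisy cube. Next it shows that the edges joining $\bar{\mathbb{A}}$ to the rest of $\bar A$ form exactly the class $\mathcal{E}_{\alpha(n)}$ of the unique neighbour of $\mathcal{E}_n$; this is where the leaf is needed. Finally it glues $\bar\lambda$ to a transported copy of its restriction to $\bar{\mathbb{A}}$.

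\textbf{Your route.} You work in the daisy-cube coordinates from the start. A convex path with edges in classes $i\neq k$ must have the form $S\cup\{i\},S,S\cup\{k\}$ with $S\cup\{i,k\}\notin V(A)$, since in the other orientations the fourth corner lies below an endpoint. Hence the $\tau$-edges are exactly the pairs lying in some minimal non-vertex, and every minimal non-vertex spans a clique of $A^{\tau}$. Absence of triangles then forces every minimal non-vertex to be a $\tau$-edge. So $V(A)$ is the family of independent sets of $A^{\tau}$, and $A=\mathcal{K}((A^{\tau})^C)$.

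\textbf{What each approach buys.} Your argument has several advantages:
\begin{itemize}
\item it gives an explicit $\lambda$, namely the identity on index sets;
\item the statement about $A/\mathcal{E}_j$ becomes immediate, since both sides are the vertices avoiding $j$, i.e.\ the side containing the minimum vertex, which is the paper's convention;
\item it uses only triangle-freeness rather than acyclicity.
\end{itemize}
The last point means it also shows, for example, that $\Lambda_n$ is the only daisy cube with $\tau$-graph $C_n$ for $n\ge 4$; the leaf-removal induction cannot reach that case. The paper's argument, in exchange, stays within the language of peripheral expansions and contractions that it uses around resonance graphs. However, it genuinely needs a leaf at every step.

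\textbf{Two points to fix in the write-up.}
\begin{itemize}
\item You misplace the hypothesis. The direction ``independent $\Rightarrow$ vertex'' holds in every daisy cube. The forest (really, triangle-free) hypothesis enters only in ``vertex $\Rightarrow$ independent'', i.e.\ the second half of your local fact. That is also the direction $Q_3^-$ violates, not the converse as you suggest.
\item When you choose $T$ minimal among non-vertices with $\{i,k\}\subseteq T\subseteq S\cup\{i,k\}$, say explicitly why every proper subset of $T$ is a vertex. Those containing both $i$ and $k$ are vertices by minimality. The others lie below $S\cup\{i\}$ or $S\cup\{k\}$. This is exactly what makes each $2$-subset of $T$ a $\tau$-edge.
\end{itemize}
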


\proof  By the definition of a $\tau$-graph, we know that the vertex set of the $\tau$-graph 
of a daisy cube is the set of $\Theta$-classes of the daisy cube.
Let $n$ be the number of $\Theta$-classes of each of daisy cubes $A$ and $B$. 
Then each of their $\tau$-graphs $A^{\tau}$ and $B^{\tau}$ has $n$ vertices.
By Lemma \ref{lema_cube},   if $A^{\tau}$ and $B^{\tau}$ are $K^C_n$, then
$A$ and $B$ are $Q_n$, and so the result  holds true.
We call a forest nontrivial if it contains at least one edge. 
Let $A^{\tau}$ and $B^{\tau}$ be nontrivial forests with $n$ vertices.
Then $n \ge 2$  since a daisy cube with exactly one $\Theta$-class is the one-edge graph, 
and its $\tau$-graph is the one-vertex graph which is a trivial forest.

Prove by induction on $n \ge 2$. If $n=2$, a daisy cube with exactly two $\Theta$-classes is either $P_3$ or $Q_2$.
Since $P_3^{\tau}=K_2$, and $Q_2^{\tau}=K^C_2$, 
a daisy cube with {exactly} two $\Theta$-classes and whose $\tau$-graph is a nontrivial forest must be $P_3$,  and
so the result holds true.
We assume that the result holds true for  daisy cubes with less than $n$ $\Theta$-classes
where $n \ge 3$ and whose $\tau$-graphs are nontrivial forests.
 
Let $A$ and $B$ be daisy cubes with exactly $n$ $\Theta$-classes where $n \ge 3$ and  
whose $\tau$-graphs $A^{\tau}$ and $B^{\tau}$ are isomorphic nontrivial forests.
We observe that  $\{\mathcal{E}_1,  \ldots, \mathcal{E}_n\}$ and $\{\mathcal{F}_1,  \ldots, \mathcal{F}_n\}$
are the vertex sets of  $\tau$-graphs $A^{\tau}$ and $B^{\tau}$, respectively.
If $\Upsilon:  A^{\tau} \to  B^{\tau}$  is   an isomorphism between $A^{\tau}$  and $B^{\tau}$
 such that   $\Upsilon(\mathcal{E}_i)=\mathcal{F}_i$ for $i \in \{1, \ldots, n\}$, then
by the assumption that $A^{\tau}$ and $B^{\tau}$ are nontrivial forests,
we can let $\mathcal{E}_n$ and $\mathcal{F}_n$  be {non-isolated} vertices of $A^{\tau}$ and
$B^{\tau}$ respectively such that $\Upsilon(\mathcal{E}_n)=\mathcal{F}_n$.
Let  $\Upsilon'$ be the restriction of $\Upsilon$ on $A^{\tau} \setminus \{\mathcal{E}_n\}$.
 Then  $\Upsilon': A^{\tau} \setminus \{\mathcal{E}_n\} \rightarrow B^{\tau} \setminus \{\mathcal{F}_n\}$ is an isomorphism
 between two forests  $A^{\tau} \setminus \{\mathcal{E}_n\}$ and $B^{\tau} \setminus \{\mathcal{F}_n\}$
such that $\Upsilon'(\mathcal{E}_i)=\mathcal{F}_i$ for $i \in \{1, \ldots, n-1\}$.
 
By Proposition 2.1 in \cite{T20}, every $\Theta$-class of a daisy cube is peripheral. 
Then a contraction of any $\Theta$-class of a daisy cube results an induced subgraph of the daisy cube.
Let $\bar{A}=A/\mathcal{E}_n$ (respectively, $\bar{B}=B/\mathcal{F}_n$) be the induced subgraph of  
the daisy cube $A$ (respectively, the daisy cube $B$) 
obtained by contracting edges of the $\Theta$-class $\mathcal{E}_n$ (respectively, the $\Theta$-class $\mathcal{F}_n$).
Then both $\bar{A}$ and $\bar{B}$ are daisy cubes since a contraction of a daisy cube is also a daisy cube by Proposition 2.2 in \cite{KM19}.
By definitions, a contraction is the inverse operation of an expansion.
Then daisy cube $A$ can be obtained from daisy cube $\bar{A}$ by a peripheral expansion 
with respect to {an isometric} subgraph $\bar{\mathbb{A}}$ of $\bar{A}$. 
A new peripheral $\Theta$-class $\mathcal{E}_n$ is generated during the expansion
which is contained in the subgraph $ \bar{\mathbb{A}}  \Box K_2$ of $A$.
It is clear that $\bar{A}$ is a {nonempty} proper induced subgraph of $A$ by the definition of a peripheral expansion.
We observe that $\bar{\mathbb{A}}$ must be a {nonempty} proper induced subgraph of $\bar{A}$. 
Otherwise,  each edge of $\mathcal{E}_n$ is contained in a 4-cycle of $A$, and so $\mathcal{E}_n$ is an isolated vertex
of $A^{\tau}$. This is a contradiction to the assumption that $\mathcal{E}_n$  is a {non-isolated} vertex of $A^{\tau}$.
Similarly, we can see that daisy cube $B$ can be obtained from a daisy cube $\bar{B}$ by a peripheral expansion 
with respect to {an isometric} subgraph $ \bar{\mathbb{B}}$ of  $\bar{B}$.
A new {peripheral} $\Theta$-class $\mathcal{F}_n$ is generated during the expansion
which is contained in the subgraph $ \bar{\mathbb{B}} \Box K_2$ of $B$.
Moreover, $\bar{B}$ is a nonempty proper induced subgraph of $B$,
and $\bar{\mathbb{B}}$ is a nonempty proper induced subgraph of $\bar{B}$.

For each $i \in \{1, \ldots, n-1\}$, let $\bar{\mathcal{E}}_i=\mathcal{E}_i \cap E(\bar{A})$ 
and $\bar{\mathcal{F}}_i=\mathcal{F}_i \cap E(\bar{B})$. 
Then  $\{\bar{\mathcal{E}}_1, \ldots,  \bar{\mathcal{E}}_{n-1}\}$ and
 $\{\bar{\mathcal{F}}_1, \ldots,  \bar{\mathcal{F}}_{n-1}\}$
are the sets of $\Theta$-classes of daisy cubes $\bar{A}$ and $\bar{B}$, respectively.
Let $\bar{A}^{\tau}$ and $\bar{B}^{\tau}$ be $\tau$-graphs of $\bar{A}$ and $\bar{B}$, respectively.

\smallskip
\smallskip
{\bf Claim 1.} 
There exists an isomorphism $\bar{\Upsilon}$: $\bar{A}^{\tau} \rightarrow \bar{B}^{\tau}$
between two forests $\bar{A}^{\tau}$ and $\bar{B}^{\tau}$  and  such that 
$\bar{\Upsilon}(\bar{\mathcal{E}}_i)=\bar{\mathcal{F}}_i$
for $i \in \{1, \ldots, n-1\}$.  
\smallskip
\smallskip

\textit{Proof of Claim 1.}  The conclusion is obvious if $\bar{A}^{\tau}$ and $\bar{B}^{\tau}$ are $K^C_{n-1}$.
 Let $\bar{A}^{\tau}$ and  $\bar{B}^{\tau}$ be nontrivial forests.
Let $f: {\bar{A}}^{\tau} \rightarrow A^{\tau} \setminus \{\mathcal{E}_n\}$ such that $f(\bar{\mathcal{E}}_i)=\mathcal{E}_i$ for $i \in \{1, \ldots, n-1\}$.
We will show that $f$ is an isomorphism between $\bar{A}^{\tau}$ and $A^{\tau} \setminus \{\mathcal{E}_n\}$. 
Note that this is not necessarily true if $A^{\tau}$ is not a forest, see Figure \ref{figure3}.

\begin{figure}[h!] 
\begin{center}
\includegraphics[scale=.8, trim= 0cm 0.5cm 0cm 0cm]{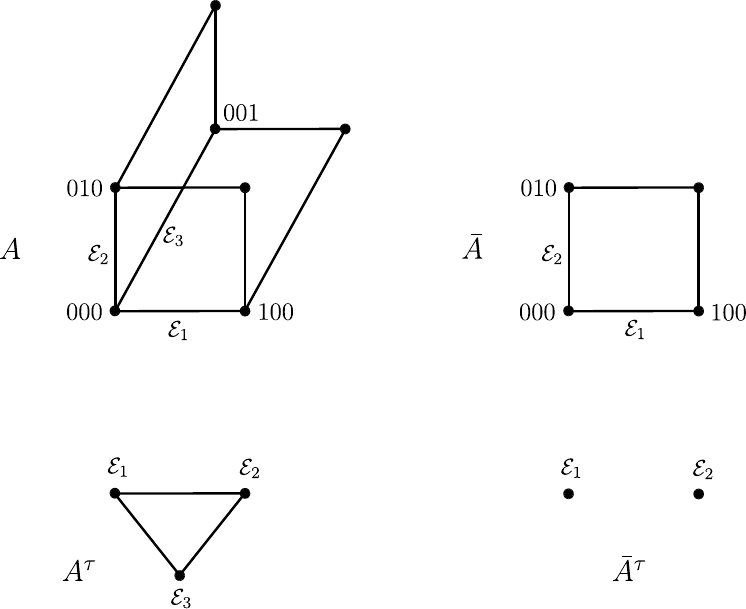}
\end{center}
\caption{\label{figure3} Daisy cube $A$ with $\Theta$-classes $\{ \mathcal{E}_1, \mathcal{E}_2, \mathcal{E}_3 \} $ 
such that $A^{\tau} \setminus \{ \mathcal{E}_3 \} $ is not isomorphic to $\bar{A}^{\tau}$.}
\end{figure}

We first show that if $\bar{\mathcal{E}}_i$ and $ \bar{\mathcal{E}}_j$ are adjacent in $\bar{A}^{\tau}$, 
where distinct $i, j \in \{1, \ldots, n-1\}$, then  $\mathcal{E}_i$ and $\mathcal{E}_j$ are adjacent in $A^{\tau} \setminus \{\mathcal{E}_n\}$. 
This can be seen as follows.
If $\bar{\mathcal{E}}_i$ and $ \bar{\mathcal{E}}_j$ are adjacent in $\bar{A}^{\tau}$,
then $\bar{A}$ contains a convex path $\bar{u}\bar{v}\bar{w}$ of length 2, 
where $\bar{u}\bar{v} \in \bar{\mathcal{E}}_i$ and $\bar{v}\bar{w} \in \bar{\mathcal{E}}_j$
such that $\bar{u}\bar{v}\bar{w}$ is not contained in any 4-cycle of $\bar{A}$.
It remains to show that $\bar{u}\bar{v}\bar{w}$  is also a convex path of $A$.
It is well known \cite{HIK11} that for a partial cube, two incident edges of a 4-cycle are contained in different 
$\Theta$-classes, and two antipodal edges of a 4-cycle are contained in the same $\Theta$-class. 
Then any  4-cycle of $A$ that contains $\bar{u}\bar{v}\bar{w}$ must have all edges from $\mathcal{E}_i \cup \mathcal{E}_j$
since $\bar{u}\bar{v} \in \bar{\mathcal{E}}_i \subseteq \mathcal{E}_i$ 
and $\bar{v}\bar{w} \in  \bar{\mathcal{E}}_j \subseteq  \mathcal{E}_j$.
By the definition of $\bar{A}$ which is the daisy cube obtained the contraction of $\mathcal{E}_n$ of $A$, 
we observe that the spanning subgraph $A - \mathcal{E}_n$ of $A$ has exactly two components
$\bar{A}$ and $A \setminus V(\bar{A})$.
It follows that any 4-cycle of $A$ containing $\bar{u}\bar{v}\bar{w}$ must be  a 4-cycle of $\bar{A}$.
Since $\bar{u}\bar{v}\bar{w}$  is not contained in any 4-cycle of $\bar{A}$, 
it follows that $\bar{u}\bar{v}\bar{w}$ is not contained in any 4-cycle of $A$.
Then $\bar{u}\bar{v}\bar{w}$  is also a convex path of length 2 in $A$, 
and so  $\mathcal{E}_i$ and $ \mathcal{E}_j$ are adjacent in $A^{\tau}$.
It follows that $\mathcal{E}_i$ and $ \mathcal{E}_j$ are adjacent in $A^{\tau} \setminus \{\mathcal{E}_n\}$.

We then show that if $\mathcal{{E}}_i$ and $\mathcal{{E}}_j$ are adjacent in $A^{\tau} \setminus \{\mathcal{E}_n\}$,
where  $i, j \in \{1, \ldots, n-1\}$ and $i \neq j$, then $\bar{\mathcal{E}}_i$ and $\bar{\mathcal{E}}_j$  are adjacent in $ \bar{A}^{\tau}$.
Prove by contradiction. Suppose that $\mathcal{{E}}_i$ and $\mathcal{{E}}_j$   
are adjacent in $A^{\tau} \setminus \{\mathcal{E}_n\}$, 
but $\bar{\mathcal{E}}_i$ and $\bar{\mathcal{E}}_j$  are not adjacent in $ \bar{A}^{\tau}$.
Then any two incident edges of $\bar{A}$, 
where one edge in $\bar{\mathcal{E}}_i$ and the other in $\bar{\mathcal{E}}_j$ 
are contained in a 4-cycle of  $\bar{A}$,
while there exists an edge $uv \in \mathcal{E}_i$ 
and an edge $vw \in \mathcal{E}_j$ such that $uvw$ is a convex path of length $2$ in $A$.
Recall that the spanning subgraph $A - \mathcal{E}_n$ of $A$ has exactly two components
$\bar{A}$ and $A \setminus V(\bar{A})$.
Then the convex path $uvw$ of $A$ is contained in  $A \setminus V(\bar{A})$.
We observe that  $A \setminus V(\bar{A})$ is {isomorphic to} $\bar{\mathbb{A}}$,
and the subgraph $\bar{\mathbb{A}}  \Box K_2$ of $A$ is generated on the vertex set of $\bar{\mathbb{A}} \cup (A \setminus V(\bar{A}))$.
Then there exists a path $\bar{u}\bar{v}\bar{w}$ in $\bar{\mathbb{A}} \subset \bar{A}$ 
such that the induced subgraph $\langle \bar{u}\bar{v}\bar{w} \cup uvw \rangle$ is isomorphic to 
$\bar{u}\bar{v}\bar{w} \Box K_2$ and contained in $\bar{\mathbb{A}}  \Box K_2$. 
It is clear that $u \bar{u}$, $v \bar{v}$, $w \bar{w}$ are edges of $\mathcal{E}_n$. 
Recall that $uv \in \mathcal{E}_i$ and $vw \in \mathcal{E}_j$.
Then $\bar{u}\bar{v} \in \bar{\mathcal{E}}_i$ and $\bar{v}\bar{w} \in \bar{\mathcal{E}}_j$.
By our assumption that $\bar{\mathcal{E}}_i$ and $\bar{\mathcal{E}}_j$  are not adjacent in $ \bar{A}^{\tau}$,
it follows that $\bar{u}\bar{v}\bar{w}$ is contained in a 4-cycle  $\bar{u}\bar{v}\bar{w}\bar{x}$ in $\bar{A}$. 
Suppose that $u \bar{u} \bar{x}$ is  contained in a 4-cycle of $A$. 
Then the 4-cycle must be $u \bar{u} \bar{x}x$  where $\bar{x}x \in \mathcal{E}_n$,
since  two antipodal edges of a 4-cycle are contained in the same $\Theta$-class. 
It follows that $uvwx$ is a 4-cycle  in  $A \setminus V(\bar{A})$ containing the convex path $uvw$ of $A$, which is a contradiction. 
Therefore, $u \bar{u} \bar{x}$ cannot be contained in a 4-cycle of $A$. 
Similarly, $w \bar{w} \bar{x}$ cannot be contained in a 4-cycle of $A$.
Now, $A$ contains two convex paths $u \bar{u} \bar{x}$ and $w \bar{w} \bar{x}$ of length 2.
Recall that two edges $u \bar{u}$ and $w \bar{w}$ of $A$ are contained in $\mathcal{E}_n$. 
Note that $\bar{u} \bar{x} \in \bar{\mathcal{E}}_j \subseteq  \mathcal{E}_j$
 and $\bar{w} \bar{x} \in \bar{\mathcal{E}}_i \subseteq \mathcal{E}_i$.
Then $\mathcal{E}_n$  is adjacent to two distinct $\Theta$-classes of $A$ in $A^{\tau}$: 
$\mathcal{E}_i$ and $\mathcal{E}_j$ where distinct $i, j \in \{1, \ldots, n-1\}$. 
Then  $\mathcal{E}_n$, $\mathcal{E}_i$, and $\mathcal{E}_j$ form a 3-cycle in $A^{\tau}$,  
since  $\mathcal{E}_i$ and $\mathcal{E}_j$ are also adjacent in $A^{\tau}$.
This is  a contradiction to the assumption that  $A^{\tau}$ is a forest.

Therefore, we have shown that if $f: \bar{A}^{\tau} \rightarrow A^{\tau} \setminus \{\mathcal{E}_n\}$ 
such that $f(\bar{\mathcal{E}}_i)=\mathcal{E}_i$ for $i \in \{1, \ldots, n-1\}$, 
then $f$ is an isomorphism  between two forests $\bar{A}^{\tau}$ and $A^{\tau} \setminus \{\mathcal{E}_n\}$.
Let $g: \bar{B}^{\tau} \rightarrow B^{\tau} \setminus \{\mathcal{F}_n\}$ 
such that $g(\bar{\mathcal{F}}_i)=\mathcal{F}_i$ for $i \in \{1, \ldots, n-1\}$.  
Similarly, we can show that $g$ is an isomorphism between two forests $\bar{B}^{\tau}$ and $B^{\tau} \setminus \{\mathcal{F}_n\}$. 

Recall that $\Upsilon': A^{\tau} \setminus \{\mathcal{E}_n\} \rightarrow B^{\tau} \setminus \{\mathcal{F}_n\}$  
is an isomorphism where $\Upsilon'$ is the restriction of $\Upsilon$ on $A^{\tau} \setminus \{\mathcal{E}_n\}$
such that $\Upsilon'(\mathcal{E}_i)=\mathcal{F}_i$ for $i \in \{1, \ldots, n-1\}$.
Let $\bar{\Upsilon}:= g^{-1} \circ \Upsilon' \circ f$.
Then $\bar{\Upsilon}: \bar{A}^{\tau} \rightarrow \bar{B}^{\tau}$ is an isomorphism
such that $\bar{\Upsilon}(\bar{\mathcal{E}}_i)= \bar{\mathcal{F}}_i$ for $i \in \{1, \ldots, n-1\}$.
Therefore, $\bar{A}^{\tau}$ and  $\bar{B}^{\tau}$ are isomorphic forests. 
This ends the proof of Claim 1.

\smallskip
\smallskip

Therefore, the following statements hold true  trivially if $\bar{A}^{\tau}$ and  $\bar{B}^{\tau}$ are trivial forests since
$\bar{A}$ and $\bar{B}$ are $Q_{n-1}$ by Lemma \ref{lema_cube},
or by induction hypothesis if $\bar{A}^{\tau}$ and  $\bar{B}^{\tau}$ are not trivial forests:
There  exists an isomorphism $\bar{\lambda}: \bar{A} \rightarrow \bar{B}$ 
such that $uv \in \bar{\mathcal{E}}_i$ if and only if $\bar{\lambda}(u)\bar{\lambda}(v) \in \bar{\mathcal{F}}_i$
for  $i \in \{1, \ldots, n-1\}$.
Moreover, the restriction of $\bar{\lambda}$ on  $\bar{A}/\bar{\mathcal{E}}_j$   
is an isomorphism between $\bar{A}/\bar{\mathcal{E}}_j$ and $\bar{B}/\bar{\mathcal{F}}_j$  for any $j \in \{1, \ldots, n-1\}$.

  \smallskip
\smallskip

{\bf Claim 2.}  Both $\bar{\mathbb{A}}$ and $\bar{\mathbb{B}}$ are daisy cubes.

 \smallskip
\smallskip

\textit{Proof of Claim 2.} 
We know that $A=\mathrm{pe}(\bar{A}, \bar{\mathbb{A}})$, where $ \bar{\mathbb{A}}$ is a 
nonempty proper induced subgraph of daisy cube $\bar{A}$.
Let $ab$ be an edge of $\Theta$-class $\mathcal{E}_n$ of $A$. Then $\mathcal{E}_n=\{e \in E(A) \mid  e \Theta ab \}$. 
Without loss of generality suppose that $\bar{A}=\langle W_{ab} \rangle$ and $ \bar{\mathbb{A}}=\langle U_{ab} \rangle$.
Then $\langle W_{ba} \rangle=\langle U_{ba} \rangle=A \setminus V(\bar{A})$ which is isomorphic to $ \bar{\mathbb{A}}$.
It follows that $|W_{ab}|> |W_{ba}|$.
By Proposition 3.1 in \cite{T20}, we can see that the minimum vertex of $A$ is contained in $\bar{A}=\langle W_{ab} \rangle$.
By Proposition 2.8   in \cite{T20}, it follows that  $\bar{\mathbb{A}}=\langle U_{ab} \rangle$ is a $\le$-subgraph of daisy cube $A$.
It is clear that $\bar{\mathbb{A}}=\langle U_{ab} \rangle$ is also a $\le$-subgraph of daisy cube $\bar{A}$.
Hence,  $A, \bar{A}, \bar{\mathbb{A}}$ have  the same  minimum vertex which is contained in  $\bar{\mathbb{A}}$.
Similarly,  we have that $B=\mathrm{pe}(\bar{B}, \bar{\mathbb{B}})$ 
where $ \bar{\mathbb{B}}$ is a $\le$-subgraph of daisy cubes $B$ and $\bar{B}$,
and $B, \bar{B}, \bar{\mathbb{B}}$ have the same the minimum vertex which is contained in  $\bar{\mathbb{B}}$.
By Proposition 2.7  in \cite{T20}, both $ \bar{\mathbb{A}}$ and $ \bar{\mathbb{B}}$ are daisy cubes
since any $\le$-subgraph of a daisy cube is isomorphic to a daisy cube.
This ends the proof of Claim 2.

 \smallskip
\smallskip
 
We further let $\mathcal{E}_n$ (respectively, $\mathcal{F}_n$) be a degree-1 vertex of $A^{\tau}$ (respectively, $B^{\tau}$).
Then $\mathcal{E}_n$ (respectively,  $\mathcal{F}_n$)  is adjacent to exactly one vertex $\mathcal{E}_{\alpha(n)}$ 
(respectively,  $\mathcal{F}_{\alpha(n)}$) of $A^{\tau}$ (respectively, $B^{\tau}$).

\begin{figure}[h!] 
\begin{center}
\includegraphics[scale=.9, trim= 0cm 0.5cm 0cm 0cm]{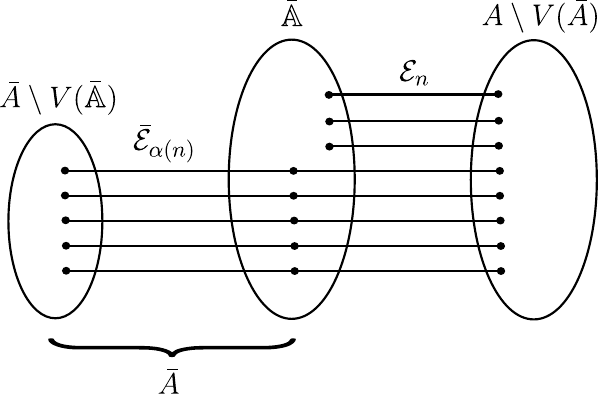}
\end{center}
\caption{\label{structure} Illustration of Claim 3.}
\end{figure}

\smallskip
\smallskip
{{\bf Claim 3.}  
$\bar{\mathcal{E}}_{\alpha(n)}=\mathcal{E}_{\alpha(n)}$ and $\bar{\mathcal{F}}_{\alpha(n)}=\mathcal{F}_{\alpha(n)}$.
The spanning subgraph $\bar{A}- \mathcal{E}_{\alpha(n)}$ of daisy cube $\bar{A}$ has exactly two components} 
$\bar{\mathbb{A}}$ and $\bar{A} \setminus  V(\bar{\mathbb{A}})$.
{The spanning subgraph $\bar{B}- \mathcal{F}_{\alpha(n)}$ of daisy cube $\bar{B}$}
has exactly two components $\bar{\mathbb{B}}$ and $\bar{B} \setminus  V(\bar{\mathbb{B}})$.
\smallskip
\smallskip

\textit{Proof of Claim 3. } 
Note that  $\bar{\mathbb{A}}$ is a {nonempty} proper induced subgraph of daisy cube $\bar{A}$.
Since $\bar{A}$ is connected,
there exists a vertex $\bar{w}$ of $\bar{A} \setminus V(\bar{\mathbb{A}})$ such that $\bar{w}$ is adjacent to a vertex $\bar{u}$ of $\bar{\mathbb{A}}$. 
By the well known facts  that any two edges on a shortest path of a bipartite graph cannot be in the same $\Theta$-class \cite{HIK11},
we can see that two edges incident to the same vertex $\bar{w}$ cannot be in the same $\Theta$-class of $\bar{A}$. 
It follows that if a vertex $\bar{w}$ of $\bar{A} \setminus V(\bar{\mathbb{A}})$ is adjacent to two distinct vertices $\bar{u}, \bar{v}$ of $\bar{\mathbb{A}}$, 
then $\bar{w}\bar{u} \in \bar{\mathcal{E}}_i \subseteq \mathcal{E}_i$ and $\bar{w}\bar{v} \in \bar{\mathcal{E}}_j \subseteq \mathcal{E}_j$
for distinct $i, j \in \{1, 2, \ldots, n-1\}$.
Note that $A=\mathrm{pe} (\bar{A}, \bar{\mathbb{A}})$ such that the set of edges between $\bar{\mathbb{A}}$ and $A \setminus V(\bar{A})$ 
is a matching which forms
the $\Theta$-class $\mathcal{E}_n$ of $A$.
Since $\bar{u}, \bar{v} \in V(\bar{\mathbb{A}})$, 
it follows that there exist two vertices $u,v$ of {$A \setminus V(\bar{A})$}
such that edges $\bar{u}u, \bar{v}v \in \mathcal{E}_n$. 
Since $\bar{w}$ is a vertex of $\bar{A} \setminus V(\bar{\mathbb{A}})$,
$\bar{w}$ cannot be incident to any edge contained in the $\Theta$-class $\mathcal{E}_n$.
Since two antipodal edges of a 4-cycle are contained in the same $\Theta$-class,
it follows that $\bar{w}\bar{u}u$ and $\bar{w}\bar{v}v$ cannot be contained in a 4-cycle.
Hence, $\bar{w}\bar{u}u$ and $\bar{w}\bar{v}v$ are two convex paths of length 2 in $A$, and $\mathcal{E}_n$ is adjacent to 
both $\mathcal{E}_i$ and $\mathcal{E}_j$ in $A^{\tau}$. This is a contradiction to our assumption that $\mathcal{E}_n$ 
is a degree-1 vertex of $A^{\tau}$.

If a vertex $\bar{u}$ of $\bar{\mathbb{A}}$
 is adjacent to two distinct vertices 
$\bar{x}, \bar{y}$ of $\bar{A} \setminus V(\bar{\mathbb{A}})$, 
then $\bar{u}\bar{x} \in \bar{\mathcal{E}}_i \subseteq \mathcal{E}_i$ and $\bar{u}\bar{y} \in \bar{\mathcal{E}}_j \subseteq \mathcal{E}_j$
for distinct $i, j \in \{1, 2, \ldots, n-1\}$ since two edges incident to the same vertex $\bar{u}$ cannot be in the same $\Theta$-class. 
Note that $A=\mathrm{pe} (\bar{A}, \bar{\mathbb{A}})$  such that the set of edges between $\bar{\mathbb{A}}$ and $A \setminus V(\bar{A})$ 
is a matching which forms 
the $\Theta$-class $\mathcal{E}_n$ of $A$, and $\bar{u}  \in V(\bar{\mathbb{A}})$. 
It follows that there exists a vertex $u$ of $A$
such that the edge $u\bar{u}  \in \mathcal{E}_n$. Since $\bar{x}, \bar{y} \in \bar{A} \setminus V(\bar{\mathbb{A}})$,
neither $\bar{x}$ or $\bar{y}$ can be incident to an edge contained in the $\Theta$-class $\mathcal{E}_n$.
It follows that $u\bar{u}\bar{x}$ and $u\bar{u}\bar{y}$ are two convex paths of length 2, and $\mathcal{E}_n$ is adjacent to 
both $\mathcal{E}_i$ and $\mathcal{E}_j$ in $A^{\tau}$. This is a contradiction to our assumption that $\mathcal{E}_n$ 
is a degree-1 vertex of $A^{\tau}$.

As a consequence, the edges between $\bar{A} \setminus V(\bar{\mathbb{A}})$
and $\bar{\mathbb{A}}$ are pairwise vertex disjoint edges.
We observe that each edge between $\bar{A} \setminus V(\bar{\mathbb{A}})$ 
and $\bar{\mathbb{A}}$ can be written as $\bar{x}\bar{w}$ where $\bar{x}$ is a vertex of $\bar{A} \setminus V(\bar{\mathbb{A}})$,
$\bar{w}$ is a vertex of $\bar{\mathbb{A}}$, and $\bar{x}\bar{w}w$ is a convex path of $A$ where   $\bar{w} w \in \mathcal{E}_n$.
Since $\mathcal{E}_n$ is a degree-1 vertex  and adjacent to $\mathcal{E}_{\alpha(n)}$ in $A^{\tau}$,
it follows that the set of edges between $\bar{A} \setminus V(\bar{\mathbb{A}})$
and $\bar{\mathbb{A}}$ must be $\mathcal{E}_{\alpha(n)}$.
Then the spanning subgraph $A-\mathcal{E}_{\alpha(n)}$ of $A$ 
has exactly two components $\bar{A} \setminus V(\bar{\mathbb{A}})$ 
and $\bar{\mathbb{A}} \cup (A \setminus V(\bar{A}))$, where each edge of 
$\mathcal{E}_{\alpha(n)}$ has one end vertex in $\bar{A} \setminus V(\bar{\mathbb{A}})$
and the other end vertex in $\bar{\mathbb{A}}$ (see also \cite{O08}).
It follows that $\mathcal{E}_{\alpha(n)}=\bar{\mathcal{E}}_{\alpha(n)}$ since
the spanning subgraph $\bar{A}-\bar{\mathcal{E}}_{\alpha(n)}$ of $\bar{A}$ has exactly two components  
$\bar{A} \setminus V(\bar{\mathbb{A}})$ and $\bar{\mathbb{A}}$, where each edge of 
$\bar{\mathcal{E}}_{\alpha(n)}$ has one end vertex in $\bar{A} \setminus V(\bar{\mathbb{A}})$
and the other end vertex in $\bar{\mathbb{A}}$.
 Similarly we can show that $\mathcal{F}_{\alpha(n)}=\bar{\mathcal{F}}_{\alpha(n)}$,
 while the spanning subgraph $B-\mathcal{F}_{\alpha(n)}$ of $B$ 
has exactly two components $\bar{B} \setminus V(\bar{\mathbb{B}})$ 
and $\bar{\mathbb{B}} \cup (B \setminus V(\bar{B}))$
 and the spanning subgraph $\bar{B}-\mathcal{F}_{\alpha(n)}$ of $\bar{B}$ has exactly two components
$\bar{B} \setminus  V(\bar{\mathbb{B}})$ and $\bar{\mathbb{B}}$. This ends the proof of Claim 3.

\smallskip
\smallskip

{\bf Claim 4.}  Daisy cube $\bar{\mathbb{A}}$ (respectively, $\bar{\mathbb{B}}$) has exactly $n-2$ $\Theta$-classes 
$\bar{\mathbb{E}}_i=\bar{\mathcal{E}}_i \cap E(\bar{\mathbb{A}})$ 
(respectively, $\bar{\mathbb{F}}_i=\bar{\mathcal{F}}_i \cap E(\bar{\mathbb{B}})$)
 for all $i \in\{1,\ldots, n\} \setminus \{n, \alpha(n)\}$. 
 
(Note that this is not necessarily true if $A^{\tau}$ (respectively, {$B^{\tau}$}) is not a forest or $\mathcal{E}_n$ 
(respectively, $\mathcal{F}_n$) is not a degree-1 vertex of $A^{\tau}$ (respectively, {$B^{\tau}$}).
For example, see Figure \ref{figure3} where daisy cube $\bar{\mathbb{A}}$  is induced on vertices $000$, $100$, and $010$,
daisy cube $\bar{A}$ is the 4-cycle containing daisy cube $\bar{\mathbb{A}}$ and contained in daisy cube $A$.
Both  $\bar{\mathbb{A}}$ and $\bar{A}$ have $n-1$ $\Theta$-classes while $A$ has $n$ $\Theta$-classes, where $n=3$.)
 
\smallskip
\smallskip

\textit{Proof of Claim 4.}  
{Recall that $\bar{A}$ has $n-1$ $\Theta$-classes $\{\bar{\mathcal{E}}_1, \ldots, \bar{\mathcal{E}}_{n-1}\}$,
where $\bar{\mathcal{E}}_i=\mathcal{E}_i \cap E(\bar{A})$ for $i \in \{1, 2 \ldots, n-1\}$. 
By Proposition 2.1 in \cite{T20}, every $\Theta$-class of a daisy cube is peripheral. 
By the proof of Claim 2, we know that $\bar{A}, \bar{\mathbb{A}}, A$ have the same minimum vertex. 
By Proposition 3.1 in \cite{T20} and Claim 3, we can see that $\bar{A}$ can be obtained from $\bar{\mathbb{A}}$ by a peripheral expansion,  
and a new $\Theta$-class $\bar{\mathcal{E}}_{\alpha(n)}=\mathcal{E}_{\alpha(n)}$ is generated during the process.
So, $\bar{\mathbb{A}}=\bar{A}/\bar{\mathcal{E}}_{\alpha(n)}$ where 
$\bar{\mathcal{E}}_{\alpha(n)}=\mathcal{E}_{\alpha(n)}$.
Recall that $\bar{A}=A/\mathcal{E}_n$.
Then $\bar{\mathbb{A}}$ can be obtained from daisy cube $A$ by contracting two peripheral 
$\Theta$-classes $\mathcal{E}_n$ and $\mathcal{E}_{\alpha(n)}$.}
Let $\bar{\mathbb{E}}_i=\bar{\mathcal{E}}_i \cap E(\bar{\mathbb{A}})=\mathcal{E}_i \cap E(\bar{\mathbb{A}})$ 
for each $i \in \{1,\ldots, n\} \setminus \{n, \alpha(n)\}$.
It follows that daisy cube  $\bar{\mathbb{A}}$ has exactly $n-2$ $\Theta$-classes $\bar{\mathbb{E}}_i$,
 where $\bar{ \mathbb{E}}_i \subset \bar{\mathcal{E}}_i \subset \mathcal{E}_i$ for $i \in \{1,\ldots, n\} \setminus \{n, \alpha(n)\}$.
Let $\bar{\mathbb{F}}_i=\bar{\mathcal{F}}_i \cap E(\bar{\mathbb{B}})=\mathcal{F}_i \cap E(\bar{\mathbb{B}})$ 
for each $i \in\{1,\ldots, n\} \setminus \{n, \alpha(n)\}$.
Similarly,  daisy cube $\bar{\mathbb{B}}$  has exactly $n-2$ $\Theta$-classes  $\bar{ \mathbb{F}}_i$, where
$\bar{ \mathbb{F}}_i \subset \bar{\mathcal{F}}_i\subset \mathcal{F}_i$ for $i \in \{1,\ldots, n\} \setminus \{n, \alpha(n)\}$.
This ends the proof of Claim 4.
 
 \smallskip
\smallskip
 
We have shown that  there  exists an isomorphism $\bar{\lambda}: \bar{A} \rightarrow \bar{B}$ 
such that $uv \in \bar{\mathcal{E}}_i$ if and only if $\bar{\lambda}(u)\bar{\lambda}(v) \in \bar{\mathcal{F}}_i$
for  $i \in \{1, \ldots, n-1\}$.
Moreover, the restriction of $\bar{\lambda}$ on $\bar{A}/\bar{\mathcal{E}}_j$  
is an isomorphism between $\bar{A}/\bar{\mathcal{E}}_j$  and  $\bar{B}/\bar{\mathcal{F}}_j$ for any $j \in \{1, \ldots, n-1\}$. 
When $j=\alpha(n)$, we have shown that $\bar{\mathbb{A}}=\bar{A}/\bar{\mathcal{E}}_{\alpha(n)}$ 
where $\bar{\mathcal{E}}_{\alpha(n)}=\mathcal{E}_{\alpha(n)}$,
and $\bar{\mathbb{B}}=\bar{B}/\bar{\mathcal{F}}_{\alpha(n)}$ 
where $\bar{\mathcal{F}}_{\alpha(n)}=\mathcal{F}_{\alpha(n)}$. 
Let $\bar{\bar{\lambda}}$ be the restriction of $\bar{\lambda}$ on $\bar{\mathbb{A}}$.
It follows that  $\bar{\bar{\lambda}}: \bar{\mathbb{A}} \rightarrow \bar{\mathbb{B}}$ is an isomorphism
such that $uv \in \bar{\mathbb{E}}_i= \mathcal{E}_i \cap E(\bar{\mathbb{A}})$ 
if and only if $\bar{\bar{\lambda}}(u)\bar{\bar{\lambda}}(v) \in \bar{\mathbb{F}}_i=\mathcal{F}_i \cap E(\bar{\mathbb{B}})$ 
for  $i \in \{1,\ldots, n\} \setminus \{n, \alpha(n)\}$.

Now, we show that there exists an isomorphism $\lambda: A \rightarrow B$ such that 
 $uv \in \mathcal{E}_i$ if and only if $\lambda(u)\lambda(v) \in \mathcal{F}_i$ for any $i \in \{1, \ldots, n\}$.
 
Recall that $A=\mathrm{pe}(\bar{A}, \bar{\mathbb{A}})$ and $B=\mathrm{pe}(\bar{B},  \bar{\mathbb{B}})$.
By the definition of a peripheral expansion, we can see that the spanning subgraph $A - \mathcal{E}_n$ of $A$ has 
exactly two components
$\bar{A}$ and $A \setminus V(\bar{A})$, and the edge set $\mathcal{E}_n$ is 
a matching between  $\bar{\mathbb{A}}$ and $A \setminus V(\bar{A})$
which induces an isomorphism $h_{\mathbb{A}}: \bar{\mathbb{A}} \rightarrow A \setminus V(\bar{A})$
such that $uv \in \bar{ \mathbb{E}}_i=\mathcal{E}_i \cap E(\bar{\mathbb{A}})$ if and only if  
$h_{\mathbb{A}}(u) h_{\mathbb{A}}(v) \in \mathcal{E}_i \cap E(A \setminus V(\bar{A}))$
for $i \in \{1,\ldots, n\} \setminus \{n, \alpha(n)\}$.
Similarly,  the spanning subgraph $B - \mathcal{F}_n$ of $B$ has 
exactly two components
$\bar{B}$ and $B \setminus V(\bar{B})$,  and the edge set $\mathcal{F}_n$ is 
a matching between  $\bar{\mathbb{B}}$ and $B \setminus V(\bar{B})$
which induces an isomorphism $h_\mathbb{B}: \bar{\mathbb{B}} \rightarrow B \setminus V(\bar{B})$
such that $uv \in \bar{ \mathbb{F}}_i=\mathcal{F}_i \cap E(\bar{\mathbb{B}})$ 
if and only if  $h_{\mathbb{B}}(u) h_{\mathbb{B}}(v) \in \mathcal{F}_i \cap E(B \setminus V(\bar{B}))$
for $i \in \{1,\ldots, n\} \setminus \{n, \alpha(n)\}$.
Let $\bar{\bar{\lambda}}^c = h_{\mathbb{B}} \circ \bar{\bar{\lambda}} \circ  h^{-1}_{\mathbb{A}}$, 
where $\bar{\bar{\lambda}}: \bar{\mathbb{A}} \rightarrow \bar{\mathbb{B}}$ 
is the isomorphism obtained by the restriction of $\bar{\lambda}: \bar{A} \rightarrow \bar{B}$ on $\bar{\mathbb{A}}$
such that $uv \in  \mathcal{E}_i \cap E(\bar{\mathbb{A}})$ 
if and only if $\bar{\bar{\lambda}}(u)\bar{\bar{\lambda}}(v) \in \mathcal{F}_i \cap E(\bar{\mathbb{B}})$ 
for  $i \in \{1,\ldots, n\} \setminus \{n, \alpha(n)\}$.
Then $\bar{\bar{\lambda}}^c: A \setminus V(\bar{A}) \rightarrow B \setminus V(\bar{B})$ is an isomorphism
such that $uv \in \mathcal{E}_i \cap E(A \setminus V(\bar{A}))$ 
if and only if $\bar{\bar{\lambda}}^c(u) \bar{\bar{\lambda}}^c(v) \in  \mathcal{F}_i \cap E(B \setminus V(\bar{B}))$ 
for $i \in \{1,\ldots, n\} \setminus \{n, \alpha(n)\}$. 

Define $\lambda: A \rightarrow B$ such that the restriction of $\lambda$ on $\bar{A}$ is $\bar{\lambda}$, 
which is an isomorphism from $\bar{A}$ to $\bar{B}$,
and the restriction of $\lambda$ on $A \setminus V(\bar{A})$ is $\bar{\bar{\lambda}}^c$, 
which is an isomorphism from $A \setminus V(\bar{A})$ to $B \setminus V(\bar{B})$.
For simplicity, we write $\lambda=\bar{\lambda} \cup \bar{\bar{\lambda}}^c$.
By the properties of $\bar{\lambda}$ and $\bar{\bar{\lambda}}^c$, we can see that  
for $i \in \{1, \ldots, n-1\}$,
$uv \in \mathcal{E}_i =  (\mathcal{E}_i \cap E(\bar{A})) \cup (\mathcal{E}_i \cap E(A \setminus V(\bar{A})))$ 
if and only if $\lambda(u) \lambda(v) \in \mathcal{F}_i =  (\mathcal{F}_i \cap E(\bar{B})) \cup (\mathcal{F}_i \cap E(B \setminus V(\bar{B})))$.
To show that $\lambda$ is an isomorphism between $A$ and $B$, 
it remains to show that $uv \in \mathcal{E}_n$   if and only if $\lambda(u)\lambda(v) \in \mathcal{F}_n$.
Let $uv \in \mathcal{E}_n$.
Then without loss of generality, we can assume that  $u \in V(\bar{\mathbb{A}})$ 
and  $v=h_{\mathbb{A}}(u) \in V(A \setminus V(\bar{A}))$.
Then $\lambda(u)=\bar{\bar{\lambda}}(u)  \in V(\bar{\mathbb{B}})$ and 
$\lambda(v)=\bar{\bar{\lambda}}^c(v)=\bar{\bar{\lambda}}^c(h_{\mathbb{A}}(u)) 
= h_{\mathbb{B}} (\bar{\bar{\lambda}}(u))  \in V(B \setminus V(\bar{B}))$.
So,  $\lambda(u)\lambda(v)=\bar{\bar{\lambda}}(u)\bar{\bar{\lambda}}^c(v) 
=\bar{\bar{\lambda}}(u) h_{\mathbb{B}} (\bar{\bar{\lambda}}(u)) \in \mathcal{F}_n$.
Hence,   $uv \in \mathcal{E}_n$ implies that $\lambda(u)\lambda(v) \in \mathcal{F}_n$.
On the other hand, if $\lambda(u)\lambda(v) \in \mathcal{F}_n$, then
without loss of generality,
we can assume that  $\lambda(u) \in V(\bar{\mathbb{B}})$ 
and $\lambda(v)=h_{\mathbb{B}}(\lambda(u)) \in V(B \setminus V(\bar{B}))$.
Since $\bar{\bar{\lambda}}: \bar{\mathbb{A}} \rightarrow \bar{\mathbb{B}}$ is an isomorphism 
obtained by the restriction of $\lambda$ on $\bar{\mathbb{A}}$,
it follows that if  $\lambda(u)\in V(\bar{\mathbb{B}})$, then
$\lambda(u)=\bar{\bar{\lambda}}(u)$ where $u \in V(\bar{\mathbb{A}})$.
Since $\bar{\bar{\lambda}}^c: A \setminus V(\bar{A}) \rightarrow B \setminus V(\bar{B})$ is an isomorphism 
obtained by the restriction of $\lambda$ on $A \setminus V(\bar{A})$,
it follows that if $\lambda(v)=h_{\mathbb{B}}(\lambda(u)) \in V(B \setminus V(\bar{B}))$, 
then $v \in V(A \setminus V(\bar{A}))$ and $\lambda(v)=\bar{\bar{\lambda}}^c(v)$.
Moreover, $\lambda(v)=h_{\mathbb{B}}(\bar{\bar{\lambda}}(u))=\bar{\bar{\lambda}}^c(h_{\mathbb{A}}(u))$.
Now $\bar{\bar{\lambda}}^c(v)=\bar{\bar{\lambda}}^c(h_{\mathbb{A}}(u))$.
Then $v=h_{\mathbb{A}}(u)  \in V(A \setminus V(\bar{A}))$ since 
$\bar{\bar{\lambda}}^c: A \setminus V(\bar{A}) \rightarrow B \setminus V(\bar{B})$ is an isomorphism.
Hence,  $\lambda(u)\lambda(v) \in \mathcal{F}_n$ implies that  $uv=u h_{\mathbb{A}}(u) \in \mathcal{E}_n$.
Therefore,  $\lambda: A \rightarrow B$ is an isomorphism such that $\lambda(\mathcal{E}_i)=\mathcal{F}_i$ for $i \in \{1, \ldots, n\}$. 

Finally, we show that the restriction of $\lambda$ on $A/\mathcal{E}_j$ is an isomorphism
between $A/\mathcal{E}_j$ and  $B/\mathcal{F}_j$  for any $j \in \{1, \ldots, n\}$.

By the definition of  $\lambda$, we know that the restriction of $\lambda$ on $\bar{A}$ 
is the isomorphism $\bar{\lambda}: \bar{A} \rightarrow \bar{B}$ 
where $\bar{A}=A/\mathcal{E}_n$ and $\bar{B}=B/\mathcal{F}_n$.
The conclusion holds true when $j=n$.

Note that $A/\mathcal{E}_{\alpha(n)}$ is the induced subgraph of $A$ 
generated on the vertex set of $\bar{\mathbb{A}} \cup (A \setminus V(\bar{A}))$,
and $B/\mathcal{F}_{\alpha(n)}$ is the induced subgraph of $B$ 
generated on the vertex set of $\bar{\mathbb{B}} \cup (B \setminus V(\bar{B}))$. 
Recall that the restriction of   $\bar{\lambda}$ on $\bar{\mathbb{A}}$
is an isomorphism $\bar{\bar{\lambda}}: \bar{\mathbb{A}} \rightarrow \bar{\mathbb{B}}$ 
such that $uv \in \mathcal{E}_i \cap E(\bar{\mathbb{A}})$ 
if and only if $\bar{\bar{\lambda}}(u) \bar{\bar{\lambda}}(v) \in  \mathcal{F}_i \cap  E(\bar{\mathbb{B}})$ 
for $i \in \{1, \ldots, n\} \setminus \{\alpha(n), n\}$. 
Moreover, $\bar{\bar{\lambda}}^c: A \setminus V(\bar{A}) \rightarrow B \setminus V(\bar{B})$ is an isomorphism
such that $uv \in \mathcal{E}_i \cap E(A \setminus V(\bar{A}))$ 
if and only if $\bar{\bar{\lambda}}^c(u) \bar{\bar{\lambda}}^c(v) \in  \mathcal{F}_i \cap E(B \setminus V(\bar{B}))$ 
for $i \in \{1, \ldots, n\} \setminus \{\alpha(n), n\}$. 
We observe that if $uv \in \mathcal{E}_n$, then $u \in V(\bar{\mathbb{A}})$ 
and  $v=h_{\mathbb{A}}(u) \in V(A \setminus V(\bar{A}))$; and
if $\lambda(u)\lambda(v) \in \mathcal{F}_n$, then $\lambda(u) =\bar{\bar{\lambda}}(u) \in V(\bar{\mathbb{B}})$ 
and  $\lambda(v)=\bar{\bar{\lambda}}^c(v) \in V(B \setminus V(\bar{B}))$.
We also have shown that $uv \in \mathcal{E}_n$  if and only if $\lambda(u)\lambda(v)=\bar{\bar{\lambda}}(u)\bar{\bar{\lambda}}^c(v) \in \mathcal{F}_n$.
It follows that the restriction of $\lambda=\bar{\bar{\lambda}} \cup \bar{\bar{\lambda}}^c$ on $A/\mathcal{E}_{\alpha(n)}$
is an isomorphism between $A/\mathcal{E}_{\alpha(n)}$
and $B/\mathcal{F}_{\alpha(n)}$. So,  the conclusion holds true when $j=\alpha(n)$.
 
It remains to show that the restriction of $\lambda=\bar{\lambda} \cup \bar{\bar{\lambda}}^c$ on $A/\mathcal{E}_j$ is an isomorphism
between $A/\mathcal{E}_j$ and  $B/\mathcal{F}_j$ for any $j \in \{1, \ldots, n\} \setminus \{\alpha(n), n\}$.
Recall that $\lambda: A \rightarrow B$ is an isomorphism 
such that $uv \in \mathcal{E}_i$ if and only if $\lambda(u)\lambda(v) \in \mathcal{F}_i$ for any $i \in \{1, \ldots, n\}$.
Moreover,
$\bar{\lambda}: \bar{A} \rightarrow \bar{B}$ is an isomorphism obtained by the restriction of $\lambda$ on $\bar{A}$,
$\bar{\bar{\lambda}}: \bar{\mathbb{A}} \rightarrow \bar{\mathbb{B}}$ 
is an isomorphism obtained by the restriction of   $\bar{\lambda}$ on $\bar{\mathbb{A}}$,
and $\bar{\bar{\lambda}}^c: A \setminus V(\bar{A}) \rightarrow B \setminus V(\bar{B})$
is an isomorphism obtained by the restriction of $\lambda$ on $A \setminus V(\bar{A})$.
Recall that $\bar{A}$ is a daisy cube with $n-1$ $\Theta$-classes
and  $\bar{\mathbb{A}}$ is a daisy cube with $n-2$ $\Theta$-classes.
Then for any $j \in \{1, \ldots, n\} \setminus \{\alpha(n), n\}$,
the restriction of the isomorphism $\bar{\lambda}: \bar{A} \rightarrow \bar{B}$
on  $\bar{A}/(\mathcal{E}_j \cap E(\bar{A}))$ is  an isomorphism
between $\bar{A}/(\mathcal{E}_j \cap E(\bar{A}))$ and $\bar{B}/(\mathcal{F}_j \cap E(\bar{B}))$;
and the restriction of $\bar{\bar{\lambda}}$ on $\bar{\mathbb{A}}/(\mathcal{E}_j \cap E(\bar{\mathbb{A}}))$ 
is an isomorphism between $\bar{\mathbb{A}}/(\mathcal{E}_j \cap E(\bar{\mathbb{A}}))$ and 
$\bar{\mathbb{B}}/(\mathcal{F}_j \cap E(\bar{\mathbb{B}}))$.
By the definition of the isomorphism $\bar{\bar{\lambda}}^c: A \setminus V(\bar{A}) \rightarrow B \setminus V(\bar{B})$,
it follows that the restriction of $\bar{\bar{\lambda}}^c$ on  $(A \setminus V(\bar{A}))/(\mathcal{E}_j \cap E(A \setminus V(\bar{A})))$ 
is an isomorphism  between $(A \setminus V(\bar{A}))/(\mathcal{E}_j \cap E(A \setminus V(\bar{A})))$ 
and  $(B \setminus V(\bar{B}))/(\mathcal{F}_j \cap E(B \setminus V(\bar{B})))$ for any $j \in \{1, \ldots, n\} \setminus \{\alpha(n), n\}$.
Note that $A/\mathcal{E}_j$ is an induced subgraph of $A$ on the union of the vertex sets of
$\bar{A}/(\mathcal{E}_j \cap E(\bar{A})) $ and $(A \setminus V(\bar{A}))/(\mathcal{E}_j \cap E(A \setminus V(\bar{A})))$,
and $B/\mathcal{F}_j$ is an induced subgraph of $B$ on the union of the vertex sets of
$\bar{B}/(\mathcal{F}_j \cap E(\bar{B})) $ and $(B \setminus V(\bar{B}))/(\mathcal{F}_j \cap E(B \setminus V(\bar{B})))$.
It follows that the restriction of $\lambda=\bar{\lambda} \cup \bar{\bar{\lambda}}^c$ on $A/\mathcal{E}_j$ is an isomorphism
between $A/\mathcal{E}_j$ and  $B/\mathcal{F}_j$ for any $j \in \{1, \ldots, n\} \setminus \{\alpha(n), n\}$.
Therefore, the conclusion holds true for all $j \in \{1, \ldots, n\}$.
\qed \\

By the above theorem we immediately get the following corollary.
\begin{corollary} \label{main_cor}
Let $A$ and $B$ be daisy cubes whose $\tau$-graphs $A^{\tau}$ and $B^{\tau}$ are forests. 
Then $A$ and $B$ are isomorphic {if and only if} $A^{\tau}$ and $B^{\tau}$ are isomorphic.
\end{corollary}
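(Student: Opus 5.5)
The corollary has two directions, and I would prove them separately. The substantive direction (isomorphic $\tau$-graphs force isomorphic daisy cubes) follows from Theorem \ref{T:tau-graph-forest-isomorphism}. The converse needs only that the $\tau$-graph is an isomorphism invariant of partial cubes.

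\emph{Sufficiency.} Suppose $\Upsilon: A^{\tau}\to B^{\tau}$ is an isomorphism.
\begin{enumerate}
\item Since the vertices of a $\tau$-graph are the $\Theta$-classes, $A$ and $B$ have the same number $n$ of $\Theta$-classes.
\item Label the $\Theta$-classes of $A$ as $\mathcal{E}_1,\ldots,\mathcal{E}_n$, and set $\mathcal{F}_i:=\Upsilon(\mathcal{E}_i)$. Then $\{\mathcal{F}_1,\ldots,\mathcal{F}_n\}$ is exactly the set of $\Theta$-classes of $B$, and $\Upsilon(\mathcal{E}_i)=\mathcal{F}_i$ for every $i$.
\item Both $\tau$-graphs are forests by hypothesis, so Theorem \ref{T:tau-graph-forest-isomorphism} applies directly and gives an isomorphism $\lambda: A\to B$.
\end{enumerate}
The only care needed is that this relabeling matches the indexed hypothesis of the theorem, which it does by construction.

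\emph{Necessity.} Let $\lambda: A\to B$ be a graph isomorphism.
\begin{enumerate}
\item An isomorphism preserves distances, since it maps shortest paths to shortest paths. The defining condition of $\Theta$ is
\[
d(x_1,y_1)+d(x_2,y_2)\neq d(x_1,y_2)+d(x_2,y_1),
\]
which is stated purely in terms of distances. Hence $e\,\Theta\, f$ in $A$ if and only if $\lambda(e)\,\Theta\,\lambda(f)$ in $B$.
\item It follows that $\lambda$ induces a bijection $\lambda_*$ from the $\Theta$-classes of $A$ onto those of $B$.
\item A path $uvw$ is convex exactly when $u,w$ are nonadjacent and $v$ is their only common neighbor. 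Both conditions are preserved by $\lambda$ and by $\lambda^{-1}$, so $uvw$ is a convex path in $A$ if and only if $\lambda(u)\lambda(v)\lambda(w)$ is a convex path in $B$.
\item Therefore two $\Theta$-classes $\mathcal{E},\mathcal{F}$ are adjacent in $A^{\tau}$ if and only if $\lambda_*(\mathcal{E})$ and $\lambda_*(\mathcal{F})$ are adjacent in $B^{\tau}$. So $\lambda_*$ is an isomorphism $A^{\tau}\to B^{\tau}$.
\end{enumerate}
This direction does not use the forest assumption.

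No step here is a genuine obstacle, because all the difficulty is contained in Theorem \ref{T:tau-graph-forest-isomorphism}. The one point I would state explicitly is the invariance of $\Theta$ and of convexity of $P_3$'s under isomorphism, since the necessity direction relies entirely on it.
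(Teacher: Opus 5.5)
Your proposal is correct and follows the same route as the paper, which obtains the corollary directly from Theorem \ref{T:tau-graph-forest-isomorphism} after indexing the $\Theta$-classes of $B$ via $\Upsilon$, just as you do. The paper leaves the necessity direction implicit. Your explicit argument, that an isomorphism preserves $\Theta$ and convex paths on three vertices, is a correct way to fill that in.
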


\section{Daisy cubes that are resonance graphs}

In this final section, we characterize daisy cubes that are resonance graphs of plane bipartite graphs. We start with some basic definitions.

A \textit{perfect matching} (or, a $1$-factor) of a graph $G$ is a subset $M \subseteq E(G)$  of edges 
such that every vertex of $G$ is incident with exactly one edge of $M$.  
An edge is said to be \textit{allowed} if it belongs to at least one perfect matching of $G$; otherwise it is called \textit{forbidden}.  
A graph with a perfect matching is called \textit{elementary} if the subgraph induced by all allowed edges is connected.  
It is known  \cite{LP86} that a bipartite graph $G$ is elementary if and only if it is connected and all of its edges are allowed. 
For any graph that admits a perfect matching, each component of the subgraph obtained by removing all forbidden edges is elementary.  

Let $G$ be a plane graph. A face $s$ of $G$ is a region enclosed by a set of edges of $G$, 
which forms the \textit{periphery of $s$}.
A face $s$ of $G$  is called \textit{finite} if the periphery of $s$ encloses a finite region,  
and \textit{infinite} otherwise. The \textit{inner dual} of $G$  is a graph whose vertices are finite faces of $G$ 
such that two vertices are adjacent  if the two corresponding finite faces of $G$ have an edge in common.

The notion of a plane weakly elementary bipartite graph was originally introduced in \cite{ZZ00} for connected graphs.  
For practical reasons, the definition is typically extended to graphs that are not connected (see, for example, \cite{ZZY04}).  
A plane bipartite graph (not necessarily connected) with a perfect matching is called \textit{weakly elementary} 
if the removal of all forbidden edges does not create any new finite faces.  
By definition, every plane elementary bipartite graph is weakly elementary.

The {\em resonance graph} (also called \textit{$Z$-transformation graph}) $R(G)$ of a plane bipartite graph $G$ is the graph 
whose vertices are the  perfect matchings of $G$, and two perfect matchings $M_1,M_2$ are adjacent 
if their symmetric difference $M_1 \oplus M_2$ forms the edge set of exactly one cycle 
that is the  periphery of a finite face $s$ of $G$ \cite{ZZY04}.

It is well known \cite{ZZ00} that if $G$ is a plane weakly elementary bipartite graph with elementary components $G_1, G_2, \ldots, G_t$,
then its resonance graph $R(G)$ is the Cartesian product $\Box_{i=1}^{t}R(G_i)$.  For more details see Section 2.3 in \cite{BCTZ25}.

Peripherally 2-colorable graphs were introduced in  \cite{BCTZ25} 
to characterize resonance graphs that are daisy cubes. 
Let $G$ be a plane elementary bipartite graph different from $K_2$. 
Then $G$ is called \textit{peripherally 2-colorable} if  every vertex of $G$ has degree 2 or 3, 
vertices with degree 3 (if exist) are all exterior vertices of $G$,
and $G$ can be properly colored black and white so that two vertices with the same color are nonadjacent,
and vertices with degree 3 (if exist) are alternatively black and white along the clockwise orientation of the periphery of $G$. 

In order to prove the main result of this section, we firstly need the following two lemmas.

\begin{lemma} \label{L:Tree-Forest}
Let $G$ be a plane weakly elementary bipartite graph whose elementary components different from $K_2$
are peripherally 2-colorable graphs $G_1, G_2, \ldots, G_t$ for some positive integer $t$. 
Then $R(G)^{\tau}=R(G_1)^{\tau} \cup R(G_2)^{\tau} \cup  \cdots  \cup R(G_t)^{\tau}$ 
is a forest, where $R(G_i)^{\tau}$ is a tree  and isomorphic to the inner dual   of $G_i$ for $1 \le i \le t$.
In particular, if $G$ is a peripherally 2-colorable graph, then $R(G)^{\tau}$ is a tree and isomorphic to the inner dual of $G$.
\end{lemma}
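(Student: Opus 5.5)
We reduce first to the elementary components and then treat a single peripherally 2-colorable graph. By \cite{ZZ00}, since $G$ is weakly elementary, $R(G)$ is the Cartesian product of the resonance graphs of its elementary components. A $K_2$ component has a unique perfect matching, so it contributes a one-vertex factor and can be dropped. Hence $R(G)=R(G_1)\Box\cdots\Box R(G_t)$.

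\emph{Step 1: the $\tau$-graph of a product.} We show that $(H_1\Box H_2)^\tau=H_1^\tau\cup H_2^\tau$ for partial cubes $H_1,H_2$. The $\Theta$-classes of a product are the classes of the factors, each copied across the other coordinate. Two incident edges coming from different factors always lie in a common 4-cycle (a product square), so they never form a convex path. Two incident edges from the same factor lie in a single layer $H_1\times\{y\}$, which is convex in the product. Therefore they form a convex $P_3$ in the product exactly when they do in $H_1$. By induction on $t$, $R(G)^\tau$ is the disjoint union of the $R(G_i)^\tau$. It then suffices to prove the last sentence of the lemma, namely that for a single peripherally 2-colorable $G$ the graph $R(G)^\tau$ is a tree isomorphic to the inner dual of $G$.

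\emph{Step 2: identify the $\Theta$-classes with faces.} For a plane elementary bipartite graph $G$, the $\Theta$-classes of $R(G)$ correspond bijectively to the finite faces of $G$. An edge $M_1M_2$ of $R(G)$ is labelled by the face $s$ with $M_1\oplus M_2=\partial s$, and edges with the same face label are exactly one $\Theta$-class. This is a standard fact for resonance graphs of elementary graphs, which are partial cubes. So the vertex sets of $R(G)^\tau$ and of the inner dual $D(G)$ agree, and it remains to compare adjacencies.

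Suppose two faces $s,t$ are disjoint, i.e.\ share no edge. If both are $M$-alternating at some $M$, then rotating $s$ and rotating $t$ commute and produce a 4-cycle. Hence no convex $P_3$ is labelled $\{s,t\}$, and $s,t$ are nonadjacent in $R(G)^\tau$. Conversely, suppose $s,t$ share an edge $e$. Pick a perfect matching $M$ with $\partial s$ $M$-alternating such that rotating $s$ makes $\partial t$ alternating. The path $M\oplus\partial s$, then $M$, then $M\oplus\partial s\oplus\partial t$ (or the analogous one) has ends differing on the symmetric difference $\partial s\oplus\partial t$. That set is not the boundary of a single face, and in a partial cube the only possible common neighbour other than the middle vertex would require $s,t$ to be simultaneously alternating at one matching, which the shared edge $e$ prevents. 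Thus the path is convex. The existence of such an $M$ is where the peripherally 2-colorable hypothesis enters. Interior vertices have degree 2 and the degree-3 vertices alternate in colour along the periphery, so the faces form a catacondensed-like chain structure. Adjacent faces share a path of odd length whose ends are degree-3 peripheral vertices, and one can build $M$ using the characterisation from \cite{BCTZ25}.

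\emph{Step 3: the tree property.} The inner dual of a peripherally 2-colorable graph is a tree. Indeed, all degree-3 vertices lie on the periphery, so no interior vertex is shared by three faces and no cycle of faces can enclose an interior region. $D(G)$ is connected because $G$ is 2-connected as an elementary graph. Alternatively, $R(G)$ is prime, so its $\tau$-graph is connected by \cite{KK07}.

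The main obstacle is the converse direction of Step 2: producing, for every pair of faces sharing an edge, a matching that yields a convex path labelled by them. Here the alternating-colour condition on degree-3 vertices must be used carefully. I would first try to do this by induction, removing a leaf face of $D(G)$ (an ear) and using the daisy-cube expansion structure from Section 3.
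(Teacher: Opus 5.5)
\textbf{There is a genuine gap.} Your Step 1 is fine; it proves directly what the paper cites as Lemma 3.2 of \cite{KK07}. The direction ``faces sharing no edge are non-adjacent in $R(G)^\tau$'' is also fine. There, add that if $\partial s$ and $\partial t$ are both $M$-alternating and share no edge, they share no vertex, since a common vertex's $M$-edge would lie on both; that is why the rotations commute.

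\emph{The missing converse.} The gap is the converse direction, which is the real content of the isomorphism with the inner dual, and which you leave open. You defer the existence of the required matching to an unspecified use of \cite{BCTZ25}.

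\emph{The convexity argument is wrong as stated.} A shared edge does \emph{not} prevent $\partial s$ and $\partial t$ from being alternating at the same matching. The middle vertex $N=M\oplus\partial s$ of your own path has both alternating. Likewise, in naphthalene (which is peripherally 2-colorable) the perfect matching containing the central edge makes both hexagons alternating. What the shared edge actually rules out is that $\partial t$ stays alternating after rotating $s$. At an end $x$ of the common path, the common edge at $x$ changes status while the other $\partial t$-edge at $x$ does not. So $M=N\oplus\partial s$ leaves $x$ with zero or two matched $\partial t$-edges, and $M\oplus\partial t$ is not a perfect matching. Also, your path should read $M$, $M\oplus\partial s$, $M\oplus\partial s\oplus\partial t$, since $M$ is not adjacent to $M\oplus\partial s\oplus\partial t$.

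\emph{Step 3 is heuristic.} Interior degree-2 vertices do exist, so ``no cycle of faces can enclose an interior region'' needs an argument.

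\emph{How the paper handles this.} It does not prove this step itself. By the proof of Theorem 3.5 of \cite{BCTZ25}, each $G_i$ is replaced by a 2-connected outerplane bipartite graph with the same inner dual (a tree) and an isomorphic resonance graph. Theorem 3.4 of \cite{C19} then identifies $R(G_i)^\tau$ with the inner dual.

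\emph{A self-contained fix that bypasses the hard direction,} using ingredients you already mention:
(a) Via the face labelling, the direction you did prove shows $R(G)^\tau$ is a spanning subgraph of the inner dual $D(G)$.
(b) $D(G)$ is a forest. Every non-peripheral edge lies on a subdivided chord joining two peripheral degree-3 vertices. All edges of such a chord separate the same two faces, and the chord cuts the disc in two, so every edge of $D(G)$ is a bridge.
(c) $R(G)$ is a median graph that is not a product of nontrivial median graphs (Corollary 3.3 of \cite{C18}), hence prime, so $R(G)^\tau$ is connected by \cite{KK07}.
A connected spanning subgraph of a forest is that forest, so $R(G)^\tau=D(G)$ and both are trees.

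\emph{Alternatively, build $N$ directly.} The common boundary of $s$ and $t$ is a single chord of odd length, and both its end edges must lie in $N$. You then still have to extend the alternating matching on $\partial s\cup\partial t$ across the pieces cut off by the other chords on $\partial s\cup\partial t$. That extension is exactly the step your proposal does not carry out.
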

\proof {It is well known \cite{ZZ00} that}
$R(G)=R(G_1) \Box R(G_2) \Box \cdots \Box R(G_t)$ where $G_1, G_2, \ldots, G_t$ 
are elementary components of $G$ different from $K_2$, 
since $R(K_2)$ is the one vertex graph and contributes a trivial factor to the Cartesian product.
 By Lemma 3.2 in \cite{KK07}, the $\tau$-graph of a Cartesian product is a disjoint union of $\tau$-graphs of its factors,
 then  $R(G)^{\tau}=R(G_1)^{\tau} \cup R(G_2)^{\tau} \cup  \cdots  \cup R(G_t)^{\tau}$.
 
For $1 \le i \le t$,  
by the proof of Theorem 3.5 in  \cite{BCTZ25},
we know that each peripherally 2-colorable graph $G_i$ is either a 2-connected outerplane bipartite graph, or
can be transformed into a 2-connected outerplane bipartite graph $G'_i$ 
such that the resulting 2-connected outerplane bipartite graph $G'_i$ is also peripherally 2-colorable,
the inner dual of $G'_i$ is isomorphic to the inner dual of $G_i$, and $R(G'_i)$ is isomorphic to $R(G_i)$. 
It is clear that the inner dual of any 2-connected outerplane bipartite graph is a tree.
So, the inner dual of any peripherally 2-colorable graph is a tree.
By Theorem 3.4 in \cite{C19},  it follows that for $1 \le i \le t$,  
each $R(G_i)^{\tau}$ is a tree and isomorphic to the inner dual  of $G_i$.
Therefore, $R(G)^{\tau}=R(G_1)^{\tau} \cup R(G_2)^{\tau} \cup  \cdots  \cup R(G_t)^{\tau}$ is a forest isomorphic to
the inner dual of the subgraph of $G$ obtained by removing all forbidden edges.
In particular, if $G$ is a peripherally 2-colorable graph, then $R(G)^{\tau}$ is a tree and isomorphic to the inner dual of $G$.
\qed\\

\begin{lemma}\label{L:DCube-ResonanceG-TauForest}    
Let $H$ be a daisy cube with at least one edge.
If $H$  is isomorphic to the resonance graph of a plane  bipartite graph $G$, then 
its $\tau$-graph $H^{\tau}$ is a  forest
whose components are trees isomorphic to the inner duals of elementary components of $G$ different from $K_2$.
In particular,  if $H$ is isomorphic to the resonance graph of a plane elementary  bipartite graph $G$,
then  $H^{\tau}$ is a  tree isomorphic to the inner dual of $G$.   
\end{lemma}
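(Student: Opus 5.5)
The plan is to reduce to Lemma \ref{L:Tree-Forest} by invoking the characterization from \cite{BCTZ25} of plane bipartite graphs whose resonance graphs are daisy cubes. Let $H \cong R(G)$ be a daisy cube with at least one edge. Since $R(G)$ has at least two vertices, $G$ has at least two perfect matchings. We then recall the main theorem of \cite{BCTZ25}: $R(G)$ is a daisy cube if and only if $G$ is weakly elementary and every elementary component of $G$ other than $K_2$ is peripherally 2-colorable. Applying it, $G$ is weakly elementary and its elementary components different from $K_2$ are peripherally 2-colorable graphs $G_1,\ldots,G_t$. Here $t\ge 1$, because otherwise $R(G)=\Box R(K_2)$ would be the one-vertex graph, contradicting that $H$ has an edge.

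Next we apply Lemma \ref{L:Tree-Forest}. It gives that $R(G)^{\tau}=R(G_1)^{\tau}\cup\cdots\cup R(G_t)^{\tau}$ is a forest, with each $R(G_i)^{\tau}$ a tree isomorphic to the inner dual of $G_i$. We must also confirm that these trees are exactly the components of the forest. This holds since each $R(G_i)^{\tau}$ is connected (a tree) and they are disjoint by Lemma 3.2 of \cite{KK07}.

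To transfer the conclusion from $R(G)$ to $H$, we note that the $\tau$-graph is an isomorphism invariant of partial cubes. A graph isomorphism preserves distances, hence relation $\Theta$ and convex paths $P_3$. Therefore $H^{\tau}\cong R(G)^{\tau}$, and the statement follows.

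For the \emph{in particular} part, let $G$ be elementary. Since $H$ has an edge, $G\neq K_2$, so $G$ itself is its only elementary component other than $K_2$; thus $t=1$ and $G_1=G$. Hence $H^{\tau}$ is a tree isomorphic to the inner dual of $G$.

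The main point requiring care is the exact form of the characterization from \cite{BCTZ25}, in particular whether it requires $G$ to be weakly elementary, or requires $G$ to have a perfect matching at all. If weak elementarity were not part of that theorem, I would first argue it directly. The route would be that removing forbidden edges does not change $R(G)$, and that a new finite face created by this removal would yield a non-peripheral $\Theta$-class. That would contradict the fact from \cite{T20} that every $\Theta$-class of a daisy cube is peripheral.
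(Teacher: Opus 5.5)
Your proposal is correct and follows the paper's proof essentially step for step: you invoke the characterization of \cite{BCTZ25} to get that $G$ is weakly elementary with peripherally 2-colorable elementary components $G_1,\dots,G_t$ (with $t\ge 1$ because $H$ has an edge), apply Lemma~\ref{L:Tree-Forest}, and transfer to $H$ via $H^{\tau}\cong R(G)^{\tau}$. Your closing hedge is unnecessary, since the characterization from \cite{BCTZ25} used in the paper already includes weak elementarity of $G$.
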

\proof  
Let $H$ be a daisy cube with at least one edge. 
If $H$ is isomorphic to the resonance graph $R(G)$ of a plane  bipartite graph $G$,
then  $R(G)$ is a daisy cube with at least one edge.
Recall that  \cite{BCTZ25}  $R(G)$ is a daisy cube 
if and only if $G$ is weakly elementary such that any elementary component of $G$ different from $K_2$ is peripherally 2-colorable.
Moreover, $G$ has at least one elementary component different from $K_2$ since $R(G)$ has at least one edge.
Let $G_1, G_2, \ldots, G_t$ be all elementary components of $G$ different from $K_2$ for some positive integer $t$.
Then each $G_i$ is peripherally 2-colorable for  $1 \le i \le t$.
By Lemma \ref{L:Tree-Forest}, $R(G)^{\tau}=\cup_{i=1}^{t} R(G_i)^{\tau}$  is a forest, 
where $R(G_i)^{\tau}$ is a tree  and isomorphic to the inner dual   of $G_i$ for $1 \le i \le t$.
 Since $H$ is isomorphic to $R(G)$, it follows that $H^{\tau}$ is isomorphic to $R(G)^{\tau}$, 
and so $H^{\tau}$ is a forest with the described properties.

 In particular, if $H$ is isomorphic to the resonance graph of a plane elementary  bipartite graph $G$ different from $K_2$,
then its $\tau$-graph $H^{\tau}$ is a  tree isomorphic to the inner dual of $G$. 
 \qed\\

\begin{theorem}\label{T:DaisyCubeForestTree}    
A daisy cube  $H$ {with at least one edge} is isomorphic to the resonance graph of a plane  bipartite graph 
if and only if its $\tau$-graph  $H^{\tau}$ is a  forest. 
In particular, a daisy cube $H$ is isomorphic to the resonance graph of a plane elementary  bipartite graph
different from $K_2$ if and only if  its $\tau$-graph $H^{\tau}$ is a  tree.   
\end{theorem}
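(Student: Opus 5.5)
The necessity in both statements is Lemma \ref{L:DCube-ResonanceG-TauForest}. If a daisy cube $H$ with at least one edge is isomorphic to $R(G)$ for a plane bipartite graph $G$, then $H^{\tau}$ is a forest. If $G$ is moreover elementary and different from $K_2$, then $H^{\tau}$ is a tree isomorphic to the inner dual of $G$. The work is therefore in the sufficiency. The plan is to realize any forest arising as $H^{\tau}$ by the $\tau$-graph of the resonance graph of a suitable plane bipartite graph, and then invoke Corollary \ref{main_cor}.

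\textbf{Building a model graph.} Assume $H^{\tau}$ is a forest with components $T_1,\ldots,T_t$, where $t\ge 1$ since $H$ has an edge. For each tree $T_i$ we construct a peripherally 2-colorable graph $G_i$ whose inner dual is isomorphic to $T_i$.
\begin{itemize}
\item If $T_i$ is a single vertex, take $G_i=C_4$ (a hexagon also works).
\item Otherwise, build a 2-connected outerplane bipartite graph by gluing even cycles (say 6-cycles) along edges according to $T_i$. Each tree vertex becomes a cycle, each tree edge becomes a shared edge, and at each cycle the shared edges are placed at distinct positions so that the result stays outerplane.
\end{itemize}
For $T_i$ with an edge, we must check that degrees are $2$ or $3$, that degree-3 vertices lie on the periphery, and that the degree-3 vertices alternate in color along the periphery. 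The natural approach is induction on $|V(T_i)|$: attach a new hexagon to a leaf-side edge of the current periphery, choosing the attaching edge and the hexagon length so that the two new degree-3 vertices preserve the black/white alternation. If the alternation fails, we would use longer even cycles (length $4k+2$ or $4k$) to shift the parity. This step is the main obstacle. It may be cleaner to cite the characterization from \cite{BCTZ25} or \cite{C19} that every tree is the inner dual of some peripherally 2-colorable graph. An example is a catacondensed "linear-alternating" chain, in the spirit of Fibonacci cubes being resonance graphs of zigzag chains and of the Vesel characterization for degree $\le 3$. That route, however, needs a variant allowing arbitrary vertex degrees in the tree, which is why cycles longer than hexagons are used.

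\textbf{Concluding.} Let $G$ be the disjoint union of $G_1,\ldots,G_t$, placed in the plane with disjoint outer regions. Then $G$ is weakly elementary, its elementary components are the $G_i$, and every one of them is peripherally 2-colorable. By \cite{BCTZ25}, $R(G)$ is a daisy cube. By Lemma \ref{L:Tree-Forest}, $R(G)^{\tau}$ is a forest isomorphic to $\bigcup_i T_i = H^{\tau}$. Since $H$ and $R(G)$ are daisy cubes with isomorphic forest $\tau$-graphs, Corollary \ref{main_cor} gives $H\cong R(G)$. When $H^{\tau}$ is a tree, we have $t=1$ and $G=G_1$ is elementary and different from $K_2$, which proves the second statement. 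The same argument shows that $H^{\tau}$ is isomorphic to the inner dual of $G$ with forbidden edges removed.
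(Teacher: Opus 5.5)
Your proof is correct and follows the paper's argument. Necessity comes from Lemma \ref{L:DCube-ResonanceG-TauForest}; sufficiency realizes each tree component of $H^{\tau}$ as the inner dual of a peripherally 2-colorable $G_i$, takes the disjoint union, applies Lemma \ref{L:Tree-Forest}, and concludes with Corollary \ref{main_cor}. You also make explicit, via \cite{BCTZ25}, that $R(G)$ is a daisy cube, which the paper uses tacitly.

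The construction step you call the main obstacle (the paper just calls it easy) is simpler than you fear, and your parity-shifting idea is misdirected. In a 2-connected outerplane bipartite graph with pairwise vertex-disjoint shared edges, the alternation condition says exactly that on each face, consecutive shared edges are separated by boundary segments of odd length. So a face for a tree vertex of degree $d$ only needs even length at least $\max\{4,2d\}$, and the length of a newly attached leaf cycle has no effect on the parity.
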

\proof The necessity part follows by Lemma \ref{L:DCube-ResonanceG-TauForest}. 
For the sufficiency part, suppose that $H^{\tau}$ is a  forest. 
Let $(H^{\tau})_1, \ldots, (H^{\tau})_t$ be the connected components of $H^{\tau}$. 
For $1 \le i \le t$,  each $(H^{\tau})_i$ is a tree, 
and it is  easy to construct a peripherally 2-colorable graph $G_i$ whose inner dual is isomorphic to $(H^{\tau})_i$.  
Let $G$ be a disjoint union of graphs $G_1, \ldots, G_t$. Then $G$ is a plane weakly elementary bipartite graph.
By Lemma \ref{L:Tree-Forest},  $R(G)^{\tau}=R(G_1)^{\tau} \cup R(G_2)^{\tau} \cup  \cdots  \cup R(G_t)^{\tau}$,
where $R(G_i)^{\tau}$ is a tree isomorphic to the inner dual of $G_i$ for $1 \le i \le t$. 
Since the inner dual of $G_i$ is isomorphic to $(H^{\tau})_i$ for $1 \le i \le t$, 
it follows that $R(G)^{\tau}$ and $H^{\tau}$ are isomorphic forests. 
Therefore, by Corollary \ref{main_cor}, daisy cubes $R(G)$ and $H$ are isomorphic.
\qed\\

Note that the $\tau$-graph of a hypercube $Q_n$ ($n \geq 2$) is $K^C_n$, which is the  graph on $n$ vertices without edges. 
By Theorem \ref{T:DaisyCubeForestTree}, the hypercube $Q_n$ ($n \geq 2$) cannot be the resonance graph of any plane elementary bipartite graph.
The same conclusion can be obtained from Theorem 3.6 in \cite{ZYY14} 
that the distributive lattice $(\mathcal{M}(G), \le)$ on the set of perfect matchings of a plane elementary bipartite graph $G$ is irreducible,
 or Corollary 3.3 in \cite{C18} that the resonance graph of a plane elementary bipartite graph
cannot be the Cartesian product of nontrivial median graphs.
However, any hypercube $Q_n$ ($n \ge 1$) is the resonance graph of a plane weakly elementary  
bipartite graph with $n$ elementary components that are even cycles,
since the resonance graph of any even cycle is $K_2$.

Recall that Fibonacci cubes $\Gamma_n$ and Lucas cubes $\Lambda_n$ are special types of daisy cubes  \cite{KM19}.
Fibonacci cubes  are the resonance graphs of fibonaccenes, i.e., zigzag hexagonal chains \cite{KZ05}.

\begin{lemma}\label{L:Fibonacci-Lucas}
(i) $\Gamma_n^{\tau}=P_{n}$, where $P_n$  is a path on $n$ vertices.

(ii) $\Lambda_n^{\tau}=C_n$ for $n \ge 3$, where  $C_n$ is a cycle on $n$ vertices.

(iii) $\Lambda_n$ cannot be the resonance graph of any plane bipartite graph for $n \ge 3$.
\end{lemma}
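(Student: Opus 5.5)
For (i) and (ii) I will work directly with the $\Theta$-classes of $\Gamma_n$ and $\Lambda_n$ as subgraphs of $Q_n$. Both are daisy cubes, hence partial cubes isometrically embedded in $Q_n$. Each coordinate position $k$ is used by some edge: $0^n \sim e_k$, which is allowed in $\Lambda_n$ for $n\ge 3$ since a single 1 has no cyclic neighbour. So the $\Theta$-classes are exactly $\mathcal{E}_k=\{uv : u,v \text{ differ in position } k\}$ for $k=1,\ldots,n$.

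The key computation is to decide, for $k\neq l$, whether there is a convex path $uvw$ with $uv\in\mathcal{E}_k$ and $vw\in\mathcal{E}_l$. Any such path has $u=v+e_k$ (or $v-e_k$) and $w=v\pm e_l$. The only possible fourth vertex of a 4-cycle through $u,v,w$ is $x=v\pm e_k\pm e_l$, so the path is convex exactly when $x$ is not a vertex.

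\emph{Adjacent positions.} If $k,l$ are adjacent positions (cyclically adjacent in the Lucas case), take $v=0^n$, $u=e_k$, $w=e_l$. Then $x=e_k+e_l$ has two consecutive 1's, so $x$ is absent and $\mathcal{E}_k\sim\mathcal{E}_l$.

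\emph{Non-adjacent positions.} If $k,l$ are not adjacent, I claim $x$ always exists. Split into cases by whether $v$ has a 1 in positions $k,l$:
\begin{itemize}
\item if $u,w$ both lie above $v$, then $x=v+e_k+e_l$ has no consecutive 1's, because $v+e_k$ and $v+e_l$ are valid and $k,l$ are non-adjacent;
\item if $u,w$ both lie below $v$, then $x=v-e_k-e_l\le v$, and daisy cubes are downward closed;
\item in the mixed case, $x=v+e_k-e_l$ (say) satisfies $x\le u$, so $x$ is again a vertex.
\end{itemize}
Hence $\Gamma_n^\tau$ has edges exactly between consecutive positions, giving $P_n$. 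For $\Lambda_n$ we add the pair $\{1,n\}$, giving $C_n$ when $n\ge3$. For $n=3$ every pair is adjacent, giving $K_3=C_3$, consistent with Figure \ref{isomorphic_tau}.

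For (iii), I will use Theorem \ref{T:DaisyCubeForestTree}, or just its necessity direction, Lemma \ref{L:DCube-ResonanceG-TauForest}. $\Lambda_n$ is a daisy cube with at least one edge, and $\Lambda_n^\tau=C_n$ is not a forest, so $\Lambda_n$ is not isomorphic to the resonance graph of any plane bipartite graph.

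The main obstacle is the careful justification that the only candidate 4-cycle through $uvw$ is $uvwx$ with $x$ as above, together with the downward-closure case analysis. The first point follows from the fact that in a partial cube antipodal edges of a 4-cycle lie in the same $\Theta$-class, which forces the remaining two edges into $\mathcal{E}_l$ and $\mathcal{E}_k$ respectively.
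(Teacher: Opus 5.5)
Your proof is correct. For (iii) it is the paper's argument: $\Lambda_n$ is a daisy cube with at least one edge whose $\tau$-graph $C_n$ is not a forest, so Theorem \ref{T:DaisyCubeForestTree} applies.

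For (i) and (ii), however, the paper computes nothing directly. It quotes Theorem 2.3 of Klav\v zar and Kov\v se, that $\mathcal{K}(G^C)^{\tau}\cong G$ for every graph $G$. It combines this with the known representations $\Gamma_n=\mathcal{K}(P_n^C)$ and $\Lambda_n=\mathcal{K}(C_n^C)$. That route takes two lines and places the result inside a general principle, but it rests entirely on external results.

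Your route is self-contained. You identify the $\Theta$-classes with coordinates via the isometric embedding in $Q_n$. You note that the only candidate for a second common neighbour of $u$ and $w$ is $x=u+w-v$. After that you use nothing but downward closure and the local rule forbidding (cyclically) consecutive $1$'s. In fact, read vertices as indicator vectors of independent sets of $P_n$ (respectively $C_n$), and read \emph{adjacent positions} as adjacency in that graph. Then your case analysis proves $\mathcal{K}(G^C)^{\tau}\cong G$ for an arbitrary graph $G$, so you are effectively reproving the cited theorem rather than avoiding it.

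Your argument is closest to the paper's alternative proof of (ii) in Proposition \ref{prop_lucas}, but it is more complete. That proof treats only a middle vertex with $0$ in both relevant positions, which is your first bullet, and asserts without comment that the fourth vertex exists. Your down and mixed cases cover the remaining orientations.

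The step you single out as the main obstacle is immediate. $\Gamma_n$ and $\Lambda_n$ are induced subgraphs of $Q_n$, and two vertices at Hamming distance $2$ have exactly two common neighbours in $Q_n$, namely $v$ and $u+w-v$.
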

\proof {From Theorem 2.3 in \cite{KK07} we know that for any graph $G$, 
the $\tau$-graph of of the simplex graph of the complement of $G$  is isomorphic to $G$, i.e. $G=\mathcal{K}(G^C)^{\tau}$.   
It is known that Fibonacci cube $\Gamma_n=\mathcal{K}(P^C_n)$ \cite{FMZ99, EKM23} and 
Lucas cube $\Lambda_n=\mathcal{K}(C^C_n)$  ($n \ge 3$) \cite{MPZ01}.
Hence, $\Gamma_n^{\tau}=P_{n}$, and $\Lambda_n^{\tau}=C_n$ ($n \ge 3$).
By Theorem \ref{T:DaisyCubeForestTree}, 
it follows that Lucas cube $\Lambda_n$ cannot be the resonance graph of any plane bipartite graph for $n \ge 3$.} \qed\\

\noindent {\bf Remark.} Lemma \ref{L:Fibonacci-Lucas} (i) is Theorem 8 in \cite{V05} given by Vesel  with a different proof.
Lemma \ref{L:Fibonacci-Lucas} (iii) is a strong version of Lemma 11 in \cite{ZOY09} given by Zhang et al. that Lucas cubes
$\Lambda_n (n \ge 3)$ cannot be the resonance graph of any plane elementary bipartite graphs.
\smallskip
\smallskip

We conclude this section with a different proof for the fact that  $\Lambda_n^{\tau}=C_n$ ($n \ge 3$).

\begin{proposition} \label{prop_lucas}
Let $\Lambda_n$ be a Lucas cube where  $n \geq 3$. Then its $\tau$-graph $\Lambda_n^{\tau}$ is a cycle $C_n$.
\end{proposition}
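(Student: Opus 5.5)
We argue directly from the definition of the $\tau$-graph, without the simplex-graph description used in Lemma \ref{L:Fibonacci-Lucas}. Index the coordinates of $\Lambda_n$ cyclically by $\mathbb{Z}_n$: a vertex is a binary string with no two consecutive 1's, where positions $n$ and $1$ also count as consecutive. Since $\Lambda_n$ is a daisy cube, hence an induced subgraph of $Q_n$ closed under $\le$, it is a partial cube. Its $\Theta$-classes are exactly the sets $\mathcal{E}_i$ of edges that flip coordinate $i$, for $i\in\{1,\ldots,n\}$. Each $\mathcal{E}_i$ is nonempty, since the edge $0^n e_i$ lies in it, so $\Lambda_n^{\tau}$ has $n$ vertices $\mathcal{E}_1,\ldots,\mathcal{E}_n$. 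The claim is that $\mathcal{E}_i$ and $\mathcal{E}_j$ are adjacent in $\Lambda_n^{\tau}$ if and only if $j=i\pm 1 \pmod n$.

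\emph{Consecutive indices are adjacent.} Consider the path $e_i\,0^n\,e_{i+1}$, whose edges lie in $\mathcal{E}_i$ and $\mathcal{E}_{i+1}$. In $Q_n$ the only common neighbours of $e_i$ and $e_{i+1}$ are $0^n$ and $e_i+e_{i+1}$. The string $e_i+e_{i+1}$ has 1's in cyclically consecutive positions, so it is not a vertex of $\Lambda_n$. Hence $0^n$ is the unique common neighbour and the path is convex, so $\mathcal{E}_i\mathcal{E}_{i+1}$ is an edge of $\Lambda_n^{\tau}$. This gives all $n$ edges of $C_n$; they are distinct because $n\ge 3$.

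\emph{Non-consecutive indices are not adjacent.} Let $j\neq i\pm1$ and $j\ne i$, and suppose $uvw$ is a path with $uv\in\mathcal{E}_i$ and $vw\in\mathcal{E}_j$. In the hypercube, $u=v+e_i$ and $w=v+e_j$, and their common neighbours are $v$ and $x=v+e_i+e_j$. It suffices to show $x\in V(\Lambda_n)$, since then $uvw$ lies in a 4-cycle and is not convex. We use that $\Lambda_n$ is closed under $\le$ and split by the value of $v$ in coordinates $i,j$:

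\begin{itemize}
\item If $v_i=v_j=1$, then $x\le v$, so $x\in V(\Lambda_n)$.
\item If $v_i=1,v_j=0$, then $x=w-e_i\le w$; the case $v_i=0,v_j=1$ is symmetric using $u$.
\item If $v_i=v_j=0$, then $x=v+e_i+e_j$. Since $u$ and $w$ are vertices, $v_{i\pm1}=0$ and $v_{j\pm1}=0$. Positions $i$ and $j$ are not cyclically consecutive, so $x$ creates no pair of adjacent 1's.
\end{itemize}

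In every case $x\in V(\Lambda_n)$, so the edge set of $\Lambda_n^{\tau}$ is exactly that of $C_n$.

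The main obstacle is the justification that the $\Theta$-classes of this induced subgraph of $Q_n$ coincide with the coordinate classes. This holds because a daisy cube is an isometric subgraph of $Q_n$, as every vertex is joined to $0^n$ by a monotone path inside the graph. With that in hand, the rest is the case check above, where the cyclic adjacency of positions $1$ and $n$ is exactly what distinguishes $C_n$ from the path $P_n$ obtained for $\Gamma_n$.
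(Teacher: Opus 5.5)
Your proof is correct and follows essentially the same route as the paper's. The path $e_i\,0^n\,e_{i+1}$ is convex because $e_i+e_{i+1}\notin V(\Lambda_n)$, and for non-consecutive $i,j$ the fourth vertex $v+e_i+e_j$ of the square always lies in $\Lambda_n$. Your cyclic indexing and four-case split on $(v_i,v_j)$ make explicit what the paper leaves to a ``similar argument'' and to its tacit assumption that the middle vertex has $0$ in both positions.

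One small correction: a monotone path to $0^n$ only shows connectivity, not isometry. To see that $\Lambda_n$ is isometric in $Q_n$, so that its $\Theta$-classes are the coordinate classes, join $u$ and $v$ through $u\wedge v$ (the coordinatewise minimum). That path stays in the down-closed vertex set and has length equal to the Hamming distance of $u$ and $v$.
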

\proof
For each integer $i \in \{ 1, \ldots, n \}$,  let $X_i$ be the $\Theta$-class {of the edges in $E(\Lambda_n)$ 
such that the binary codes of two end vertices of each edge in $X_i$ differ exactly} in the $i$-th position.
We first prove that  $X_1$ is adjacent to $X_2$ and $X_n$ in $\Lambda_n^{\tau}$. 
Obviously,  vertices  with binary codes $0^n$, $010^{n-2}$,  $10^{n-1}$ 
form a convex path on three vertices since $110^{n-2}$ does not exist in a Lucas cube. Therefore, $X_1$ and $X_2$ are adjacent. 
Similarly, we can show that $X_1$ and $X_n$ are adjacent. 
Further, we show that $X_1$ is not adjacent to $X_j$ for each $j \in \{ 3, \ldots, n-1 \}$. Suppose that $X_1$ and $X_j$ are adjacent. 
Then there exist a convex path on vertices 
$1u_2\ldots u_{j-1}0 u_{j+1}\ldots u_n$, $0u_2\ldots u_{j-1}0 u_{j+1}\ldots u_n$, $0u_2\ldots u_{j-1}1 u_{j+1}\ldots u_n$ in $\Lambda_n$, 
where the first edge belongs to $X_1$ and the other edge to $X_j$.  
However, $1u_2\ldots u_{j-1}1 u_{j+1}\ldots u_n$ is also a vertex in  $\Lambda_n$, so we obtain a $4$-cycle, which is a contradiction.
It follows that  $X_1$ cannot be adjacent to $X_j$ in $\Lambda_n^{\tau}$, where $j \in \{ 3, \ldots, n-1 \}$. 
Therefore,  $X_1$ is adjacent only to $X_2$ and $X_n$ in $\Lambda_n^{\tau}$.  
 Similar argument can be applied for any vertex of $\Lambda_n^{\tau}$, and so $\Lambda_n^{\tau}$ is a cycle on $n$ vertices. 
\qed \\

\vskip 0.2in

\noindent{\bf Acknowledgement:} 
Niko Tratnik and Petra \v Zigert Pleter\v sek acknowledge the financial support from the Slovenian Research and Innovation Agency: research programme P1-0297 
and research projects J1-70016 (Niko Tratnik, Petra \v Zigert Pleter\v sek), 
N1-0285 (Niko Tratnik), and J7-50226 (Petra \v Zigert Pleter\v sek).  


\end{document}